\documentclass[11pt]{amsart}

\usepackage{amsmath}
\usepackage{amssymb}
\usepackage[margin=1in]{geometry}
\usepackage{xcolor}
\usepackage{graphicx}
\usepackage{microtype}
\usepackage[msc-links]{amsrefs}
\usepackage{hyperref}
\usepackage{enumitem}
\hypersetup{
    colorlinks=false,
    linkcolor= Red,
    filecolor=BrickRed,      
    urlcolor=MidnightBlue,
    pdfpagemode=FullScreen,
    }
\usepackage{amsthm}

\theoremstyle{plain}
\newtheorem{theorem}{Theorem}[section]
\newtheorem{corollary}[theorem]{Corollary}
\newtheorem{lemma}[theorem]{Lemma}

\theoremstyle{definition}
\newtheorem{definition}{Definition}[section]

\theoremstyle{remark}

\newcommand{\A}{\mathbb A}
\renewcommand{\P}{\mathbb P}
\newcommand{\C}{\mathbb C}

\newcommand{\Q}{\mathbb Q}
\newcommand{\Z}{\mathbb Z}

\newcommand{\abs}[1]{\left\lvert #1 \right\rvert}

\newcommand{\Mod}[4]{\mathcal{M}_{#1}^{#2}[\mathcal{#3}_{#4}]}
\newcommand{\Aut}[1]{\mathrm{Aut}(#1)}
\newcommand{\restr}[1]{|_{#1}}

\makeatletter
\def\blfootnote{\gdef\@thefnmark{}\@footnotetext}
\makeatother

\DeclareMathOperator{\Bl}{Bl}
\DeclareMathOperator{\Pic}{Pic}

\title{Moduli Spaces of Degree Two Rational Maps with Portraits Up to Six Points}
\author{Louis Diaz}
\address{Department of Mathematics, The Pennsylvania State University, University Park, Pa 16802}
\email{lfd5286@psu.edu}
\date{\today}
\subjclass[2020]{37P55,14E20, 14E05} 
\keywords{portrait, dynamical system, algebraic dynamics, Kodaira dimension}

\begin{document}

    \begin{abstract}
        We consider the moduli space, $\Mod{d}{N}{P}{}$, of degree $d$ rational self maps of $\P^N$ with prescribed pre-periodic structure $\mathcal{P}$ which were introduced by Doyle and Silverman. 
        It was shown in \cite{Blanc_2015} that, $\Mod{2}{1}{P}{6}$, the moduli space of degree two rational maps with a $6$-cycle is a surface of general type. 
        Here we compute the Kodaira dimension of all the moduli spaces with up to six pre-periodic points and show that $\kappa = -\infty, 0, 1,2$ are all realized for some $\mathcal{P}$. 
    \end{abstract}

\maketitle

    \section{Introduction}

    A primary question in arithmetic dynamics concerns pre-periodic points of a polynomial map $f: \Q \to \Q$.  
    Let $f^{(n)}$ denote the $n$-th iterate of $f$ under composition. 
    We say that a point $p$ is \emph{periodic of order n} if $f^{(n)}(p) = p$ and if $f^{(i)}(p) \neq p$ for $0 < i < n$. 
    A point is \emph{pre-periodic} if there exists a non-negative integer $m$ such that the point $f^{(m)}(p)$ is periodic. 

    It was conjectured in \cite{flynn1995cyclesquadraticpolynomialsrational} that if $f$ is quadratic, then it has no rational periodic points of order $n>3$. 
    This is also known as Poonen's conjecture, due to the progress made in \cite{poonen1995completeclassificationrationalpreperiodic}.  
    There is evidence for the $6$-cycle case in \cite{flynn1995cyclesquadraticpolynomialsrational}.
    However, it was also shown in \cite{Blanc_2015} that if, instead, $f: \P^1 \to \P^1$ is a degree two rational map, there are several infinite families of maps and rational periodic points of order $6$. 

    The collection of rational maps and points $(f, p_1,...,p_n)$ that exhibit a prescribed dynamic structure naturally forms a moduli space. 
    The study of these moduli spaces is motivated by the Uniform Boundedness Conjecture posed by Morton and Silverman \cite{Morton1994RationalPP}.  
    The conjecture asserts that for every $D \geq 1, N \geq 1$, and $d \geq 2$, there is a constant $C(D,N,d)$ such that for all number fields $K/\Q$, satisfying $[K:\Q] \leq D$ and all endomorphisms of $\P^N(K)$ of degree $d$, the number of pre-periodic points of $f$ is bounded above by a constant, $C(D,N,d)$. 
    Even in the simplest case, $(D,N,d) = (1,1,2)$, the answer is not known. 
    
    Although this conjecture is rooted in number theory and arithmetic dynamics, we can understand it from the perspective of geometry. 
    These moduli spaces, denoted as $\Mod{d}{N}{P}{}$ where $d$ is the degree of the rational maps $f$, $N$ is the dimension of the projective space, and $\mathcal{P}$ is the prescribed dynamic structure, which were constructed in \cite{Doyle_2020}, naturally arise from this problem. 
    
    Silverman and Doyle's conjecture \cite[Conjecture 10.6]{Doyle_2020} asserts that if $N \geq 1, d \geq1$, then there is a constant $C(N,d)$ such that for all unweighted pre-periodic portraits $\mathcal{P}$ with more points than $C(N,d)$ and $\Mod{d}{N}{P}{} \neq \emptyset$, the variety $\Mod{d}{N}{P}{} \otimes_{\Z} \C$ is an irreducible variety of general type.   
    The moduli space structure identifies rational tuples on $\Mod{d}{N}{P}{}$ with rational points exhibiting the portrait structure. 
    Since varieties of general type are conjectured to have sparse rational points, a general type moduli space would indicate that the rational points exhibiting the portrait structure are not Zariski dense. 

    Suppose that $\mathcal{P}$ is a prescribed pre-periodic structure on six points and $f$ is a degree two rational map on $\P^1$ over $\Q$. 
    Then the moduli space of degree two rational maps that exhibit the prescribed structure can be realized as a surface in $\P^3$. 
    Blanc, Canci, and Elkies showed in \cite{Blanc_2015} that if $\mathcal{P}_6$ is a $6$-cycle, $\Mod{2}{1}{P}{6}$ is a surface of general type. 
    They also showed that if $\mathcal{P}_5$ is a $5$-cycle, then $\Mod{2}{1}{P}{5}$ is a rational surface. 

    In this article, we study the case where $\mathcal{P}$ has at most $6$ points and decomposes as a union of cycles and $f$ is a rational map of degree two on $\P^1$. 
    The remaining cases where $\mathcal{P}$ is a cycle are investigated in \cite{Blanc_2015}.     
    Milnor proved in \cite{milnor1993geometry} that the moduli space $\mathrm{Rat}_2^1 \mathbin{/\!/} \Aut{\P^1} \cong \C^2$ where the coordinates are given by the symmetric polynomials  on the multipliers of the fixed points of $f$. 
    Silverman later showed that $\mathrm{Rat}_d^1 \mathbin{/\!/} \Aut{\P^1}$ exists as a geometric quotient over $\Z$, which uses the coefficients of $f$ for the coordinates \cite{silverman1996spacerationalmapsp1}.  
    It was shown that \cites{Blanc_2015, manes2009modulispacesfamiliesrational} that the surfaces $\Mod{2}{1}{P}{n}$ are geometrically irreducible for $n > 1$ and that they are all rational for $1 \leq n \leq 5$. 
    The six point case is the first instance that the moduli spaces are not rational. 
    However, six points is not enough to guarantee that the moduli space is general type; there are many six-point portraits for which the moduli spaces yield other values for the Kodaira dimension. 
    If the Silverman and Doyle conjecture is true, we must consider more points. 
    
    If $n$ is a positive integer, then each partition $n = i_1 + \cdots i_r$ determines a pre-periodic structure $\mathcal{P}_{i_1, \cdots, i_r}$ on $n$ points, which is the union of $i_1,...,i_r$-cycles. 
    Note that every pre-periodic structure on $n$ points arises from a partition of $n$. 
    The strategy to classify these surfaces is to embed each one as a surface in $\A^3$. 
    From there, we either explicitly resolve the singularities, use sub-pre-periodic structures to express it as a covering space, or embed it into the space of six ordered points on $\P^1$. 
    We will not be focusing on the $3,4,5,6$-cycle, as those were dealt with in \cite{Blanc_2015}. 
    In section 3 we show that all pre-periodic structure on $3,4,5$ points yield moduli spaces which are rational surfaces or empty. 
    In section 4, we construct different models for the moduli spaces that we will use to classify spaces in the $n=6$ case. 
    In section 5, we prove the following: 

    \begin{theorem}
        Let $\mathcal{P}_{5,1}, \mathcal{P}_{4,1,1}, \mathcal{P}_{4,2}, \mathcal{P}_{3,3}, \mathcal{P}_{3,2,1}, \mathcal{P}_{3,1,1,1}$ be pre-periodic portraits on six points and let $\kappa$ denote the Kodaira dimension. 
        Then,
            \begin{itemize}
                \item $\kappa = -\infty$ for $\Mod{2}{1}{P}{3,1,1,1}$, 
                \item $\kappa = 0$ for $\Mod{2}{1}{P}{4,2}$, 
                \item $\kappa = 1$ for $\Mod{2}{1}{P}{3,2,1}, \Mod{2}{1}{P}{3,3}, \Mod{2}{1}{P}{4,1,1}$, 
                \item $\kappa = 2$ for $\Mod{2}{1}{P}{5,1}$. 
            \end{itemize}
    \end{theorem}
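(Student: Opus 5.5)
The proof will go portrait by portrait. The guiding principle is to realize each $\Mod{2}{1}{P}{}$ as a cover of a moduli space with fewer marked points. By Section~3 and \cite{Blanc_2015}, that smaller space is a rational surface. The Kodaira dimension is then read off from the branch data.

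Concretely, forgetting the extra cycle(s) gives a generically finite map $\Mod{2}{1}{P}{i_1,\dots,i_r}\to\Mod{2}{1}{P}{i_1}$. For instance, $\Mod{2}{1}{P}{5,1}\to\Mod{2}{1}{P}{5}$ is a $3$-sheeted cover, since a degree two map has three fixed points. On Milnor's coordinates $\mathrm{Rat}_2^1/\!/\Aut{\P^1}\cong\C^2$, marking fixed points corresponds to choosing roots of the cubic whose roots are the multipliers $\sigma$-coordinates. Similarly, marking an extra $2$-cycle or $3$-cycle corresponds to choosing a root of a known dynatomic-type polynomial in the coefficients of $f$.

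Using the affine models of Section~4, I will write each surface as an explicit hypersurface in $\A^3$ of the form $F(x,y,t)=0$, with $t$ the coordinate on the fibre. I will then compute in the following order:
\begin{enumerate}[label=(\roman*)]
\item the branch curve $B$ of the projection to the rational base, namely the discriminant of $F$ in $t$;
\item the singularities of $B$ and of the surface, resolving them to canonical (ADE) form where possible, so that the canonical class of a smooth model is $K=\pi^*(K_{\text{base}}+\tfrac12 B)$ in the double cover situation, with the analogous Hurwitz-type formula for triple covers;
\item the plurigenera, or directly the numerical class of $K$.
\end{enumerate}
I expect the outcomes to separate as follows:
\begin{itemize}
\item For $\mathcal{P}_{3,1,1,1}$, the cover should be rational or ruled, because the branch curve has low degree. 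One can even exhibit a pencil of rational curves, or a birational parametrization by the multiplier of one fixed point, giving $\kappa=-\infty$.
\item For $\mathcal{P}_{4,2}$, I will try to embed the surface via the six marked points into the space of six ordered points on $\P^1$, i.e.\ a double cover of $\P^2$ branched along six lines. This is a K3 surface, which would give $\kappa=0$.
\item For $\mathcal{P}_{3,2,1}$, $\mathcal{P}_{3,3}$ and $\mathcal{P}_{4,1,1}$, I will look for an elliptic fibration. A natural candidate is the pencil given by fixing one multiplier or one Milnor coordinate, whose fibres are genus one curves. Then $\kappa=1$ follows once $K$ is numerically a positive multiple of the fibre class. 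That requires showing $\chi(\mathcal{O})\ge 3$ or exhibiting at least two multiple or singular-fibre contributions.
\item For $\mathcal{P}_{5,1}$, the branch curve over the rational surface $\Mod{2}{1}{P}{5}$ will have large degree. I will show $K^2>0$ and $p_g\ge 2$ with $|2K|$ giving a map of maximal image, mirroring the argument of \cite{Blanc_2015} for the $6$-cycle.
\end{itemize}

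The main obstacle is distinguishing $\kappa=1$ from $\kappa=0$ or $2$ in the middle cases. This requires controlling the singularities of the branch curve exactly: non-ADE singularities change the adjunction correction, and hence the answer. My first attempt will be to compute the fibres of the genus one fibration explicitly and apply the canonical bundle formula $K=f^*(K_C+L)$ with $\deg L=\chi(\mathcal{O})$. Here $p_g$ and $q$ are obtained by counting adjoint polynomials on the affine model.
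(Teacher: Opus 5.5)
\textbf{There is a genuine gap.} What you have written is a programme, not a proof. Each value of $\kappa$ is stated as an expectation, and the computations that actually decide it are never carried out: the singularities of the specific surfaces or branch curves, and the resulting class of $K$ on a smooth model. The one concrete geometric identification you make is false. For $\mathcal{P}_{4,2}$, the space of six ordered points $P_1^6=(\P^1)^6/\!/\Aut{\P^1}$ is a threefold (the Segre cubic $\sum X_i=\sum X_i^3=0$ in $\P^5$), not a double plane branched along six lines. The image of $\Mod{2}{1}{P}{4,2}$ is a surface inside it, and nothing makes that surface such a double plane. The structure that works, and that your own step (ii) would produce if applied here, is the double cover $\Mod{2}{1}{P}{4,2}\to\Mod{2}{1}{P}{4}$. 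A quadratic map has a single $2$-cycle, so only the labelling of $p_5,p_6$ is free. Here $S_4\cong\A^2\subset\P^2$, and the branch curve is the discriminant of the quadratic $\Phi^*_{2,f}$. That is a sextic with four double points, two of which carry an infinitely near double point; it is not six lines. Because these singularities are simple, $K_{\overline{S}'}=\pi^*\bigl(K_{\Bl_{4,2}\P^2}+\tfrac12\overline{\mathcal{B}}'\bigr)=0$, and $q=0$ follows from $\pi_*\mathcal{O}\cong\mathcal{O}\oplus\mathcal{L}^{-1}$.

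\textbf{The $\kappa=1$ cases.} You have no fibration here. Level sets of a multiplier or of a Milnor coordinate are a guess, with no argument that the fibres have genus one. Without the singularity analysis you also cannot obtain the $\chi(\mathcal{O})$ or multiple fibres that your canonical bundle formula needs. Singular fibres are not an alternative, since they enter only through $\chi(\mathcal{O}_S)=e(S)/12$. In the paper the fibration is read off from adjunction on a resolved model:
\begin{itemize}
\item For $\mathcal{P}_{3,2,1}$, the quintic in $\P^3$ has triple points $[1:0:0:0]$ and $[0:1:0:0]$, and $K=\pi^*(H-E_8-E_9)|_{\overline{S}'}$. So $|K|$ is cut by the planes $Cz+Dw=0$ through the line joining these points. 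That line lies on the surface and becomes a $(-1)$-curve, leaving a pencil of genus-one quartics, i.e.\ fibres given by the position $z$ of the fixed point.
\item For $\mathcal{P}_{4,1,1}$, the double cover of $\P^2$ branched along $L\cup C_7$, after the odd-multiplicity correction, has $K=p^*(H-E_2)$. This is the pencil of lines through the triple point of $C_7$.
\item For $\mathcal{P}_{3,3}$, one uses the surface inside the Segre cubic, where $K=(H-E_{\mathcal{C}'})|_{\overline{S}'}$.
\end{itemize}

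\textbf{The cases $\mathcal{P}_{5,1}$ and $\mathcal{P}_{3,1,1,1}$.} For $\mathcal{P}_{5,1}$, the map to $\Mod{2}{1}{P}{5}$ is a triple cover. Unless it is cyclic, its canonical class is $\pi^*K_Y+R$ with $R$ not a pullback. So the ``analogous Hurwitz-type formula'' you invoke does not yield $K^2$ or $p_g$. The paper avoids this by resolving the sextic model in $\P^3$ (twelve singular points plus seven infinitely near double points). It obtains $K=\pi^*(2H-E_8-\cdots-E_{12})|_{\overline{S}'}$ with $K^2=14$. Only $\mathcal{P}_{3,1,1,1}$ is easily repaired, though not for the reason you give (low-degree branching). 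The real reason is that the third fixed point is determined by the other two: the cubic is linear in $z$, namely $z(xy-x-y)=xy-1$. Hence the surface is birational to $\Mod{2}{1}{P}{3,1,1}\subset\A^2$.
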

    \noindent
    We use this and the results of section 5 to show:
        \begin{corollary}
            Suppose that $\mathcal{P}_6$ or $ \mathcal{P}_{5,1}$ are subportraits of $\mathcal{P}$. 
            If $\Mod{2}{1}{P}{} \neq \emptyset$, then it is a surface of general type. 
        \end{corollary}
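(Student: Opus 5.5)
The corollary follows from a dominant-map argument combined with the classification of six-point portraits. Suppose $\mathcal{P}' \in \{\mathcal{P}_6, \mathcal{P}_{5,1}\}$ is a subportrait of $\mathcal{P}$. Forgetting the marked points of $\mathcal{P}$ that do not belong to $\mathcal{P}'$ gives a natural morphism
\[
\pi \colon \Mod{2}{1}{P}{} \longrightarrow \mathcal{M}_2^1[\mathcal{P}'],
\]
which sends $(f, p_1, \dots, p_n)$ to $(f, p_{j_1}, \dots, p_{j_6})$, where $p_{j_1},\dots,p_{j_6}$ are the points realizing $\mathcal{P}'$.

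The first step is to show that both spaces are surfaces and that $\pi$ is generically finite and dominant. Every $\mathcal{M}_2^1[\mathcal{P}]$ sits over $\mathrm{Rat}_2^1 \mathbin{/\!/} \Aut{\P^1}$, which is two-dimensional by Milnor. The projection to $\mathrm{Rat}_2^1\mathbin{/\!/}\Aut{\P^1}$ is quasi-finite, because a degree two map has only finitely many points of each given period. Hence every nonempty $\mathcal{M}_2^1[\mathcal{P}]$ has dimension at most $2$. Now assume $\Mod{2}{1}{P}{} \neq \emptyset$. The imposed conditions are open conditions (distinctness, exact period) on top of the conditions "being a point of period $k$", and these are automatic for a general map. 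So each nonempty irreducible component dominates $\mathrm{Rat}_2^1\mathbin{/\!/}\Aut{\P^1}$ and is a surface. Consequently $\pi$ is a generically finite morphism between surfaces, and it is dominant onto $\mathcal{M}_2^1[\mathcal{P}']$, which is irreducible by \cite{Blanc_2015} and the results of section 5.

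The second step is to use that the target is of general type: $\kappa(\mathcal{M}_2^1[\mathcal{P}_6]) = 2$ by \cite{Blanc_2015}, and $\kappa(\mathcal{M}_2^1[\mathcal{P}_{5,1}]) = 2$ by the Theorem above. Kodaira dimension does not decrease under generically finite dominant maps of varieties of the same dimension. Concretely, we pass to smooth projective models $\tilde{\pi}\colon X \dashrightarrow Y$, resolve the indeterminacy so that $\tilde\pi$ becomes a morphism, and use $K_X = \tilde\pi^*K_Y + R$ with $R$ effective (the ramification divisor). Pluricanonical forms then pull back injectively, giving $\kappa(X) \geq \kappa(Y) = 2$. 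Since $\dim X = 2$, we conclude $\kappa(X) = 2$. Applying this to each irreducible component shows that $\Mod{2}{1}{P}{}$ is of general type.

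The main obstacle is the dimension and irreducibility bookkeeping. We must make sure no component of $\Mod{2}{1}{P}{}$ collapses to a curve inside a special locus, such as maps with extra automorphisms, so that every component genuinely dominates. If quasi-finiteness over the target is unclear at special points, we argue on a dense open set of maps with trivial automorphism group and distinct periodic points, where $\pi$ is étale onto its image. If $\Mod{2}{1}{P}{}$ turns out reducible, we treat each component separately, since each one maps dominantly onto the irreducible target.
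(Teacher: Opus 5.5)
Your proposal is correct and is essentially the paper's argument. The paper applies Riemann--Hurwitz to the forgetful map $\Mod{2}{1}{P}{} \to \Mod{2}{1}{P}{6}$ or $\Mod{2}{1}{P}{5,1}$, so that the canonical class is a big pullback plus an effective ramification divisor. Your dimension, dominance and resolution-of-indeterminacy bookkeeping supplies details the paper leaves implicit.

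The one step to tighten is ``every component dominates''. Genericity of a single map only shows that \emph{some} component dominates. Instead, note that each component of $\mathrm{End}_2^1[\mathcal{P}]$ is cut out by the $n$ equations $f(p_i)=p_{\Psi(i)}$ in the $(5+n)$-dimensional $\mathrm{Rat}_2^1\times(\P^1)^n$, so it has dimension at least $5$. Together with quasi-finiteness over $\mathrm{Rat}_2^1$, this forces every component to dominate.
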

    Silverman and Doyle conjecture that there exists an $N$ such that for every portrait with more than $N$ points, $\Mod{2}{1}{P}{}$ is either empty or general type. 
    Theorem 1.1 shows that $n=6$ is not sufficient. 
    If we assume that the moduli space $\Mod{2}{1}{P}{k}$ is general type for all sufficiently large $k$, then we have the following: 
    \begin{corollary}
        Suppose that $\Mod{2}{1}{P}{k}$ is general type for all $k\geq 6$. 
        Let $\mathcal{P}$ be a pre-periodic portrait with $N$ points where $N \geq 59$. 
        Then, $\Mod{2}{1}{P}{}$ is general type. 
    \end{corollary}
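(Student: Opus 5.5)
The plan is to reduce everything to the two general-type inputs we have: the hypothesis that $\Mod{2}{1}{P}{k}$ is of general type for $k\geq 6$, and Corollary 1.2 for $\mathcal{P}_{5,1}$. The tool for transferring general type is the forgetful morphism. If $\mathcal{P}'\subseteq\mathcal{P}$ is a subportrait that is closed under $f$, then forgetting the extra points gives a map $\Mod{2}{1}{P}{}\to\mathcal{M}_2^1[\mathcal{P}']$. This map is dominant and generically finite whenever the target is nonempty and two-dimensional. The reason is that every extra point is either a periodic point of bounded period or an iterated preimage of a point already marked, so it is determined up to finitely many choices. For a generically finite dominant morphism of surfaces $X\to Y$ we have $\kappa(X)\geq\kappa(Y)$, because $K_X = \pi^*K_Y + R$ with $R$ effective. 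Hence it suffices to show that every portrait with $N\geq 59$ points either contains an $f$-closed subportrait whose moduli space is already known to be of general type, or has empty moduli space.

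\textbf{Step 1: count the periodic part.} For a degree two map, the number of points of exact period $n$ is at most the number of primitive period-$n$ solutions of $f^{(n)}(z)=z$. This gives at most $3$ fixed points, $2$ points of period $2$, $6$ of period $3$, $12$ of period $4$, and $30$ of period $5$, for a total of at most $3+2+6+12+30=53$ periodic points with all cycle lengths at most $5$.
\begin{itemize}
\item If $\mathcal{P}$ contains a cycle of length $k\geq 6$, the forgetful map to $\Mod{2}{1}{P}{k}$ together with the hypothesis finishes the argument.
\item If $\mathcal{P}$ contains a $5$-cycle and a fixed point, Corollary 1.2 finishes it.
\end{itemize}
Otherwise the periodic part of $\mathcal{P}$ has at most $53$ points, so $\mathcal{P}$ has at least $N-53\geq 6$ strictly preperiodic points.

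\textbf{Step 2: analyze the tails.} In degree two, a periodic point has exactly one strictly preperiodic preimage, and every other point has at most two preimages. So the tail structure is a forest of binary trees hanging off the cycles. I would use the $f$-closedness of subportraits to trim $\mathcal{P}$ down to a subportrait consisting of a small periodic part together with $6$ tail points. The model to aim for is the one from Section 4, in which marked points are embedded into the space of six ordered points on $\P^1$: six tail points whose images are determined by the periodic part should give a configuration whose moduli space dominates something of general type. Concretely, I would try to realize $\Mod{2}{1}{P}{}$ as a cover of one of the surfaces treated in Section 5. I would then compute the branch divisor of the covering, which lies where two tail points collide or a tail point hits a critical point. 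Finally, I would show via $K_X=\pi^*K_Y+R$ that $K_X$ becomes big once enough independent branch components appear.

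\textbf{Main obstacle.} The hardest step is the tail case in Step 2. Adding a single preimage layer does not obviously raise the Kodaira dimension, and the number $59=53+6$ suggests the argument needs exactly six tail points to force a big canonical class. I would first try the case of a single $2$-cycle or fixed point carrying a tail of six points, compute its model in $\A^3$ directly, and check bigness there. Then I would argue that every other tail configuration with at least six points dominates one of these basic cases under a forgetful map. If that domination fails for some shapes, the fallback is a direct ramification count, showing that the covering's branch locus has enough components for $K_X$ to become big.
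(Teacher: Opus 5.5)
Your Step 1 is the paper's proof, and in the setting the paper actually works in it is already complete. The paper only ever treats portraits that are disjoint unions of cycles: its proof speaks of ``the partitions of $N$ that we consider,'' following the convention set in the introduction. For such portraits your bound $3+2+6+12+30=53$ on points lying in cycles of length at most $5$ forces a cycle of length $k\geq 6$ whenever $\Mod{2}{1}{P}{}$ is nonempty and $N\geq 54$. The forgetful map to $\Mod{2}{1}{P}{k}$ and $K=p^*K+R$ then give general type, exactly as you say. In that setting your ``otherwise'' branch is empty, so Step 2 is vacuous and the appeal to $\mathcal{P}_{5,1}$ is unnecessary. Your reading of $59$ as ``$53$ plus six tail points'' is also off. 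The argument already works from $N=54$, so $59$ is just a non-sharp constant and carries no information about tails.

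As written, however, you declare Step 2 necessary and then only outline it. If you intend the statement to cover portraits with strictly preperiodic points, this is a genuine gap, and the stated hypothesis cannot close it. Take a fixed point $p_0$ with a chain $p_{58}\mapsto\cdots\mapsto p_1\mapsto p_0$, $p_1\neq p_0$. It has $59$ points, no cycle of length $\geq 6$, and no $\mathcal{P}_{5,1}$. Its only $f$-closed subportraits are the initial segments $\{p_0,\dots,p_j\}$. The bottom of this tower is the rational surface $\Mod{2}{1}{P}{1}$, the next level is birational to it, and every higher level is an iterated double cover of these. So no forgetful map lands on anything assumed or proved to be of general type. Showing that such a tower eventually has big canonical class is a separate problem, requiring an explicit analysis of the branch loci of iterated preimages. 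Your sketch gives no mechanism for this: the $P_1^6$ model does not address tails, and every Section 5 surface except $\mathcal{P}_{5,1}$ has $\kappa\leq 1$, so a cover of one tells you nothing without exactly that branch computation. So either restrict explicitly to unions of cycles, in which case Step 1 is the whole proof, or acknowledge that the general statement needs an input beyond the hypothesis. A small correction as well: a periodic point has at most one strictly preperiodic preimage, not exactly one. It has none if its predecessor in the cycle is critical.
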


    \section{Preliminaries}

    We make precise some definitions from the introduction and work over $\C$. 
    
    \begin{definition}
        An \emph{unweighted pre-periodic portrait} is a pair 
            \[
                \mathcal{P} = (\mathcal{V}, \Psi)
            \]
        such that: 
            \begin{itemize}
                \item $\mathcal{V}$ is a finite set 
                \item $\Psi: \mathcal{V} \to \mathcal{V}$ is a function. 
            \end{itemize}
    \end{definition}
    \noindent
    An \emph{(unweighted pre-periodic) subportrait} of $\mathcal{P}$ is a portrait $(\mathcal{W}, \widehat{\Psi})$ such that $\mathcal{W} \subseteq \mathcal{V}$ and $\Psi\restr{\mathcal{W}} = \widehat{\Psi}$. 
    Since we are only considering unweighted pre-periodic portraits, we will refer to them as portraits. 
    
    We now describe the structure of these moduli spaces. 
    Let $\mathrm{Rat}_d^1$ denote the variety that parametrizes the endomorphisms of degree $d$ on $\P^1$. 
    Each endomorphism of $\P^1$ can be expressed as $f([U:V]) = [f_1([U:V]): f_2([U:V])]$, where, 
        \begin{align*}
            f_1([U:V]) & = a_0U^d + a_1U^{d-1}V + \cdots + a_{d-1}UV^{d-1} + a_dV^d,  \\
            f_2([U:V]) & = a_{d+1}U^d + a_{d+2}U^{d-1}V + \cdots + a_{2d}UV^{d-1} + a_{2d+1}V^d. 
        \end{align*}
    Taking the ordering $[a_0: \cdots : a_{2d+1}]$, we see that $\mathrm{Rat}_d^1 \subset \P^{2d+1}$. 
    It is an open subvariety of dimension $2d+1$, given by $\mathrm{Res}(f) \neq 0$, which is a polynomial in the coefficients, \cite{JOUANOLOU1991117}. 
    These maps behave well under conjugation. 
    That is, given $\varphi \in \Aut{\P^1}$, the map $\varphi^{-1} f \varphi$ has the same degree and the same preperiodic structure of $f$. 

    \begin{definition}
        Let $n \geq 1$ be an integer and $\mathcal{P}$ be a portrait on $n$ points. 
        Then, the space of degree $d$ rational maps on $\P^1$  that exhibit the portrait structure is given by 
            \[
                \mathrm{End}_d^1[\mathcal{P}] = \left\{(f, p_1, \dots p_n) \mid f(p_i) = p_{\Psi(i)} \text{ and } p_i \neq p_j \text{ for }i \neq j \right\} \subset \mathrm{Rat}_d^1 \times (\P^1)^n
            \]
        The action by $\Aut{\P^1}$ on $\mathrm{End}_d^1[\mathcal{P}]$ is given by
            \[
                \varphi \ast (f, p_1, ..., p_n) = (\varphi^{-1}f\varphi, \varphi^{-1}(p_1),..., \varphi^{-1}(p_n))
            \]
        and the moduli space of degree $d$ rational maps on $\P^1$ is given by $\Mod{d}{1}{P}{} := \mathrm{End}_d^1[\mathcal{P}] \mathbin{/\!/} \Aut{\P^1}$ and exists as a geometric quotient scheme \cite{Doyle_2020}. 
    \end{definition}

    We will be focusing on degree two rational maps so we write $[a_0:a_1:a_2:a_3:a_4:a_5] \in \P^5$ for the map $f([U:V]) = [a_0U^2 + a_1UV + a_2V^2: a_3U^2 + a_4UV + a_5V^2]$. 
    The open subvariety, $\mathrm{Rat}_2^1 \subset \P^5$, is given by 
        \[
            0 \neq a_2^2a_3^2 + a_0^2a_5^2 - 2a_0a_2a_3a_5 - a_1a_2a_3a_4 - a_0a_1a_4a_5 + a_0a_2a_4^2 + a_1^2a_3a_5. 
        \]

    We make a convention about the ordering of a tuple $(f, p_1, ..., p_{i+j}) \in \Mod{2}{1}{P}{i,j}$. 
    The points $p_1, ..., p_i$ form the $i$-cycle and $p_{i+1}, ..., p_{i+j}$ form the $j$-cycle. 
    We follow a similar convention for portraits with more indices. 
    We also make a note of how many cycles a degree two map $f: \P^1 \to \P^1$ has.     
    We write $f^{(n)}([U:V]) = [f_1^{(n)}([U:V]): f_2^{(n)}([U:V])]$ where the polynomials $f_1^{(n)}, f_2^{(n)}$ are given by the recursion 
        \begin{align*}
            f_1^{(n)}([U:V]) &= f_1^{(n-1)}([f_1([U:V]) : f_2([U:V])]), \\ 
            f_2^{(n)}([U:V]) &= f_2^{(n-1)}([f_1([U:V]) : f_2([U:V])]).
        \end{align*}
    Let $\Phi_{n,f}([U:V]) = V f_2^{(n)}([U:V]) - U f_1^{(n)}([U:V])$. 
    If $p = [u:v] \in \P^1$ is a root of this polynomial, then $f^{(n)}(p) = p$. 
    It is not necessarily true that $p$ is periodic of order $n$.  
    To fix this issue, we consider the $n$-th dynatomic polynomial. 
    \begin{definition}
        The $n$-th dynatomic polynomial for $f$ is given by 
            \[
                \Phi_{n,f}^*([U:V]) = \prod\limits_{k \mid n} \left( \Phi_{k,f}([U:V]) \right)^{\mu(n/k)}. 
            \]
    \end{definition} \noindent
    If $p= [u:v]$ is a point of period $n$, then it is a root of the $n$-th dynatomic polynomial. 
    The degree of the $n$-th dynatomic polynomial tells us how many points of period $n$ to expect. 
    Thus, $f$ can have at most three fixed points, one $2$-cycle, two $3$-cycles, three $4$-cycles, and six $5$-cycles. 
    The discriminant of $\Phi_{n,f}^*([U:V])$ encodes whenever any of the points of period $n$ has multiplicity greater than $1$. 
    In general, $p$ is not a root of a $k$-th dynatomic polynomial for $1 \leq k < n$. 
    An example is provided by Manes in \cite{manes2009modulispacesfamiliesrational} with $[U:V] \longmapsto [-U^2 + V^2: UV]$ with $p = [1:0]$. 

    Since we will often require a specific choice of coordinates on a blow-up, we fix some notation here. 
    The blow up of $\A^n$ at the origin, $\Bl_0 \A^n \subset \A^n \times \P^{n-1}$, can be expressed with coordinates $(x_1,...,x_n) \times [X_0: \cdots X_{n-1}]$ defined by $x_iX_j = x_j X_i$ for $1 \leq i,j \leq n$. 
    If we want to blow up a point $y \in A^n$ that is not the origin, we apply a linear transformation $(x_1 - y_1, ..., x_n - y_n)$ to shift it to the origin, proceed as above, and apply the inverse linear transformation. 
    If $V \subset \A^n$ is a singular subvariety, then the charts $X_i = 1$ let us identify the infinitely near singularities. 
    We will show that the moduli spaces can be realized as singular surfaces in $\A^3$ in the $6$ point case. 
    These moduli spaces will often require iterated blow ups and we use the product notation to identify singularities. 
    
    For example, consider the quintic curve , $C : \{ y^2 = x^5\} \subset \A^2$. 
    It has a singularity at $(0,0)$. 
    If we blow up the singularity, the strict transform $C'$ is still singular. 
    Using $(x,y) \times [U:V] \in \A^2 \times \P^1$ for our coordinates, then in the $U=1$ chart, our strict transform is given by $C': \{V^2 = x^3\}$, a cuspidal cubic, which has a double point at $(x,V) = (0,0)$. 
    We have that $C$ has an infinitely near singularity at $(0,0) \times [1:0]$. 
    To resolve this singularity, we deal with the affine chart with coordinates $(x,V) \in \A^2$ and continue the blow up process as before.

    \section{Portraits With Less Than Six Points}

    In this section we show that if $\mathcal{P}$ is a portrait with $3,4$, or $5$ points, $\Mod{2}{1}{P}{}$ is a rational surface. 
    The proof for each moduli space is done by a case-by-case analysis. 
    We do, however, take advantage of the action by $\Aut{\P^1}$ and take three of the points to be $[0:1], [1:0]$, and $[1:1]$. 

        \begin{lemma}
            Let $\mathcal{P}$ be a portrait on three points. 
            Then, $\Mod{2}{1}{P}{}$ is isomorphic to an open subset of $\A^2$. 
        \end{lemma}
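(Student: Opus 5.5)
Since $\mathrm{PGL}_2$ acts simply transitively on ordered triples of distinct points of $\P^1$, I will normalize $(p_1,p_2,p_3) = ([0:1],[1:0],[1:1])$. This identifies $\Mod{2}{1}{P}{}$ with the space of degree two maps satisfying the three incidence conditions $f(p_i) = p_{\Psi(i)}$, with no residual group action. First, the normalization is possible because the points are required distinct. Second, the only element of $\Aut{\P^1}$ fixing all three points is the identity. So the slice $S=\{(f,[0:1],[1:0],[1:1])\}\cap \mathrm{End}_2^1[\mathcal{P}]$ maps bijectively onto the quotient. Because the action is free and $S$ is a section, $\mathrm{End}_2^1[\mathcal{P}]\cong \Aut{\P^1}\times S$. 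Hence the geometric quotient of \cite{Doyle_2020} is isomorphic to $S$.

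Next I describe $S$ inside $\mathrm{Rat}_2^1\subset\P^5$. Each condition $f(p)=q$ with $p,q\in\{[0:1],[1:0],[1:1]\}$ is a pair of linear conditions on $(a_0,\dots,a_5)$ modulo scaling. Concretely:
\begin{itemize}
\item $f([1:0])=[a_0:a_3]$, so $f([1:0])=[1:0]$ means $a_3=0$, $f([1:0])=[0:1]$ means $a_0=0$, and $f([1:0])=[1:1]$ means $a_0=a_3$.
\item $f([0:1])=[a_2:a_5]$ gives the analogous conditions.
\item $f([1:1])=[a_0+a_1+a_2:a_3+a_4+a_5]$ gives one linear condition.
\end{itemize}
In each case the condition is a single homogeneous linear equation, with the open requirement that the image not be $[0:0]$, which the resultant condition already guarantees. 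Three points therefore impose three independent linear conditions, cutting out a $\P^2\subset\P^5$. Independence holds because each equation involves a disjoint or visibly independent set of coordinates.

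Thus $S = \P^2\cap\{\mathrm{Res}\neq 0\}$. This is the complement of a curve in $\P^2$. To obtain an open subset of $\A^2$ rather than of $\P^2$, I will show that a line of $\P^2$ lies inside the resultant locus, so that removing it leaves the affine chart. The natural candidate is the coordinate hyperplane setting one surviving coefficient to zero, which forces a common factor between $f_1$ and $f_2$.

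There are finitely many portraits on three points, given by the maps $\Psi:\{1,2,3\}\to\{1,2,3\}$ up to relabeling. I will tabulate them and for each check which linear relation or coordinate forces a common root. For example, a bad configuration is one where both $f_1$ and $f_2$ vanish at a common point, and I expect it can be arranged in one coordinate.

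The main obstacle is this last step. In some cases the resultant restricted to $\P^2$ may be irreducible, containing no line. In that case I would instead use the trivial fact that the complement of a nonempty curve in $\P^2$ contains $\P^2$ minus a line, giving an open immersion into $\A^2$. That weaker reading suffices, and I will take it as the fallback interpretation of "isomorphic to an open subset of $\A^2$."
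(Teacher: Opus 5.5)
Your reduction is the same as the paper's. You normalize the points to $[0:1],[1:0],[1:1]$, note that each condition $f(p_i)=p_{\Psi(i)}$ is one linear equation in $a_0,\dots,a_5$, and conclude that the space is the open subset $\{\mathrm{Res}\neq 0\}$ of a plane $\P^2\subset\P^5$. Your slice argument identifying the quotient with this set is fine, and more explicit than the paper's.

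\textbf{The gap.} The step that actually produces $\A^2$ is never carried out: you must exhibit a line of this $\P^2$ lying inside $\{\mathrm{Res}=0\}$. You only say you will tabulate cases and ``expect'' a coordinate to work. The fallback you propose if no such line exists is false.
\begin{itemize}
\item The complement of a curve $C$ contains $\P^2$ minus a line only if $C$ lies in that line.
\item An inclusion $\P^2\setminus C\subseteq\P^2\setminus L\cong\A^2$ requires $L\subseteq C$.
\item No other embedding can rescue this in general: $\P^2$ minus a smooth conic has Picard group $\Z/2$, while every open subset of $\A^2$ has trivial Picard group.
\end{itemize}
So as written the argument does not establish the statement.

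\textbf{The missing idea.} It is already implicit in your remark about the image not being $[0:0]$.
\begin{itemize}
\item Evaluation at three distinct points identifies pairs of binary quadratic forms with $(\C^2)^3$.
\item Hence your plane has coordinates $\lambda_1,\lambda_2,\lambda_3$ with $f(p_i)=\lambda_i q_i$, where $q_i$ is a fixed representative of $p_{\Psi(i)}$.
\item The locus $L_i=\{\lambda_i=0\}=\{f_1(p_i)=f_2(p_i)=0\}$ is a line.
\item Every $f\in L_i$ has $p_i$ as a common root of $f_1,f_2$. So $L_i\subseteq\{\mathrm{Res}=0\}$, and the moduli space lies in $\P^2\setminus L_i\cong\A^2$.
\item If $\mathrm{Res}$ vanishes identically on the plane, as for constant $\Psi$, the space is empty, which is also fine.
\end{itemize}
The irreducible case you worry about therefore never occurs.

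\textbf{Comparison with the paper.} The paper computes the resultants directly: $-a_3a_2(a_1-a_3)(a_1+a_2)$ for $\mathcal{P}_{2,1}$ and $a_0(a_0+a_1)(a_0-a_4)(a_0+a_1-a_4)$ for $\mathcal{P}_{1,1,1}$. Each is a product of four linear forms, giving the complement of three lines in $\A^2$. Three of these factors are exactly your $L_1,L_2,L_3$. The fourth consists of products $\ell\cdot g$, where $\ell$ is a linear form and $g$ is the degree-one map realizing the portrait ($[V:U]$, resp.\ the identity).

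With the observation above added, your argument is uniform in $\Psi$. It covers every map $\Psi$ on three points, including the $3$-cycle and strictly pre-periodic portraits, whereas the paper treats only $\mathcal{P}_{2,1}$ and $\mathcal{P}_{1,1,1}$. The paper's explicit factorization, in turn, gives the sharper description of the boundary.
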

        \begin{proof}
            If $\mathcal{P}$ is a portrait on $3$ points, that is not a cycle, then $\mathcal{P} = \mathcal{P}_{2,1}$ or $ \mathcal{P}_{1,1,1}$. 
                \begin{enumerate} [label = (\roman*)]
                    \item 
                        We embed $\Mod{2}{1}{P}{2,1}$ in $\mathrm{Rat}_2^1 \subset \P^5$. 
                        It is given by the maps $f$ which satisfy \newline $f([0:1]) = [1:0], f([1:0]) = [0:1]$, and $f([1:1])= [1:1]$.
                        These equations then show that $f$ must be of the form
                            \[
                                f([U:V]) = [a_1UV + a_2V^2: a_3U^2 +(a_1 + a_2 - a_3)UV]. 
                            \]
                        That is, it is an open subset of $\P^2$, with coordinates $[a_1: a_2:a_3]$. 
                        The corresponding resultant of $f$ is given by $-a_3 a_2 (a_1 - a_3)(a_1 + a_2)$, so that $\Mod{2}{1}{P}{2,1}$ is the complement of three lines in $\A^2$. 
                    \item 
                        We embed $\Mod{2}{1}{P}{1,1,1}$ in $\mathrm{Rat}_2^1 \subset \P^5$. 
                        It is given by the maps $f$ which fix $[0:1],[1:0]$, and $[1:1]$. 
                        These equations then show that $f$ must be of the form
                            \[ 
                                f([U:V]) = [a_0U^2 + a_1UV: a_4UV +(a_0 + a_1 - a_4)V^2].
                            \]
                        It is an open subset of $\P^2$ with coordinates $[a_0: a_1: a_4]$.  
                        The corresponding resultant of $f$ is $a_0(a_1 + a_0)(a_0 - a_4 )(a_0 + a_1 - a_4)$. 
                        Thus, $\Mod{2}{1}{P}{1,1,1}$ is also the complement of three lines in $\A^2$. 
                \end{enumerate}
        \end{proof}

        \begin{lemma}
            Let $\mathcal{P}$ be a portrait on four points. 
            Then, either $\Mod{2}{1}{P}{}$ is empty or isomorphic to an open subset of $\A^2$. 
        \end{lemma}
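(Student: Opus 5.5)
The plan is a case-by-case normalization, as in the proof of the previous lemma for three points. As in the scope fixed in the introduction, I treat only portraits that are unions of cycles. Such a portrait on four points corresponds to a partition of $4$, so $\mathcal{P}$ is one of $\mathcal{P}_4, \mathcal{P}_{3,1}, \mathcal{P}_{2,2}, \mathcal{P}_{2,1,1}, \mathcal{P}_{1,1,1,1}$.

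\emph{Empty cases.} These come from the count of periodic points recorded after the definition of dynatomic polynomials. A degree two map has at most three fixed points, so $\Mod{2}{1}{P}{1,1,1,1}=\emptyset$. It also has at most one $2$-cycle, so $\Mod{2}{1}{P}{2,2}=\emptyset$.

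\emph{The $4$-cycle.} The case $\mathcal{P}_4$ is treated in \cite{Blanc_2015}, where the space is shown to be rational. I would quote that result, or redo it by the method below: send $p_1,p_2,p_3$ to $[0:1],[1:0],[1:1]$ and let $p_4=[t:1]$.

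\emph{Common method for $\mathcal{P}_{2,1,1}$ and $\mathcal{P}_{3,1}$.} Use the $\Aut{\P^1}$-action to place three of the points at $[0:1],[1:0],[1:1]$. Since $\Aut{\P^1}$ acts simply transitively on ordered triples of distinct points, this gives a section of the quotient. Hence $\Mod{2}{1}{P}{}$ is identified with the set of pairs $(f,t)$, where $p_4=[t:1]$ and $t\neq 0,1$, satisfying the portrait conditions, the condition $\mathrm{Res}(f)\neq 0$, and distinctness of the points. The conditions on the first three points are linear in $(a_0,\dots,a_5)$ and cut out a $\P^2$ of maps. The condition on $p_4$ is then a single further equation that is linear in the coefficients, with coefficients polynomial in $t$.

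For $\mathcal{P}_{2,1,1}$, take the $2$-cycle $[0:1]\leftrightarrow[1:0]$ and the fixed points $[1:1]$ and $[t:1]$. By the previous lemma, $f=[a_1UV+a_2V^2 : a_3U^2+(a_1+a_2-a_3)UV]$. The condition $f([t:1])=[t:1]$ reads $a_1t+a_2=a_3t^3+(a_1+a_2-a_3)t^2$. Solving for $a_2$ gives
\[
a_2(1-t^2)=t(t-1)(a_3t+a_1), \qquad \text{so} \qquad a_2=-\frac{t(a_3t+a_1)}{1+t}.
\]
Thus the space is an open subset of $\A^1_t\times\P^1_{[a_1:a_3]}$, hence of $\A^2$. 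It is cut out by the nonvanishing of $\mathrm{Res}(f)$, of $t$, $t-1$ and $t+1$, and of one coordinate of $[a_1:a_3]$. On $t=-1$ the equation forces $a_1=a_3$, and there the resultant $-a_3a_2(a_1-a_3)(a_1+a_2)$ vanishes, so nothing is lost by removing this line.

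For $\mathcal{P}_{3,1}$, normalize $[0:1]\mapsto[1:0]\mapsto[1:1]\mapsto[0:1]$ and take $[t:1]$ fixed. The three linear conditions give $f=[a_1UV+a_2V^2 : a_3U^2+a_4UV]$ with $a_1+a_2=0$ removed, or the analogous form forced by $f([1:1])=[0:1]$. Then $f(t)=t$ is linear in the remaining coefficients, and I solve it for one of them as above.

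\emph{Main obstacle.} The delicate step is the division: the coefficient in $t$ of the solved variable vanishes on finitely many values of $t$. I must check, as for $t=-1$ above, that on those lines either there is no map in $\mathrm{Rat}_2^1$ with the required portrait, or all such maps are degenerate, i.e.\ have resultant zero or non-distinct points. If this check failed for one choice of variable, I would solve for a different coefficient instead, so that the whole space remains a single graph over an open subset of $\A^1\times\P^1$.
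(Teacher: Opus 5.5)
Your strategy is the paper's: rule out $\mathcal{P}_{2,2}$ and $\mathcal{P}_{1,1,1,1}$ by counting fixed points and $2$-cycles, normalize three points to $[0:1],[1:0],[1:1]$, use that the portrait conditions are linear in $(a_0,\dots,a_5)$, and realize the space as an open subset of $\A^1\times\P^1$ lying in a single affine chart. Your $\mathcal{P}_{2,1,1}$ case is correct. It uses a different normalization from the paper: you put the $2$-cycle at $[0:1],[1:0]$, where the paper puts the two fixed points. This lets you reuse the three-point normal form, and your exclusion of $t=-1$ mirrors the factor $(x+1)^2$ in the paper's resultant.

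One point needs to be made precise. The coordinate you discard must be $a_3$, which is justified because $a_3$ divides $\mathrm{Res}(f)=-a_3a_2(a_1-a_3)(a_1+a_2)$. Discarding $a_1=0$ instead would lose points: for example $a_3=1$, $a_1=0$, $a_2=-t^2/(1+t)$ gives resultant $a_2^2\neq 0$. This factor is exactly what upgrades ``open in $\A^1\times\P^1$'' to ``open in $\A^2$''.

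The genuine gap is $\mathcal{P}_{3,1}$, which you never carry out, and the normal form you write there is wrong. The form $[a_1UV+a_2V^2:a_3U^2+a_4UV]$ has $a_0=a_5=0$, which makes $[0:1]\leftrightarrow[1:0]$ a $2$-cycle and contradicts $[1:0]\mapsto[1:1]$. For your normalization the three cycle conditions give $a_5=0$, $a_3=a_0$ and $a_2=-(a_0+a_1)$. The fixed-point condition $f([t:1])=[t:1]$ then reads
\[
t^2a_4=(t-1)\bigl(a_0(t+1)+a_1\bigr)-a_0t^3 .
\]
So the coefficient you divide by is $t^2$, which is nonzero because $t\neq 0$, and the ``main obstacle'' you flag does not occur. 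Moreover, $a_0=0$ gives $f=[a_1V(U-V):a_4UV]$, which has the common factor $V$. Hence $a_0\neq 0$ on $\mathrm{Rat}_2^1$, and the space lies in the chart $a_0=1$ as an open subset of $\A^2$ with coordinates $(t,a_1)$. The paper does the same with the normalization $[0:1]\to[1:0]\to[x:1]\to[0:1]$ and $[1:1]$ fixed; either choice works. Until this computation is supplied, your proposal proves the lemma only for $\mathcal{P}_{2,1,1}$.

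Finally, for $\mathcal{P}_4$ (which the paper excludes), citing that the space is rational is weaker than the claim. Either cite the stronger fact $S_4\cong\A^2$ from Blanc--Canci--Elkies, or run your method. It gives $a_5=0$, $a_3=a_0$, $a_2=-a_0t^2-a_1t$ and $ta_4=a_0(1-t-t^2)+a_1(1-t)$, with $a_0\neq 0$ forced as before.
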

        \begin{proof}
            If $\mathcal{P}$ is a portrait on four points, that is not a cycle, then $\mathcal{P} = \mathcal{P}_{3,1}, \mathcal{P}_{2,2}, \mathcal{P}_{2,1,1}, \mathcal{P}_{1,1,1,1}$. 
                \begin{enumerate} [label = (\roman*)] 
                    \item 
                        If $\mathcal{P} = \mathcal{P}_{2,2}$ or $\mathcal{P}_{1,1,1,1}$, then $\Mod{2}{1}{P}{} = \emptyset$. 
                    \item 
                        We embed $\Mod{2}{1}{P}{3,1} \subset \mathrm{Rat}_2^1 \times \A^1 \subset \P^5 \times \A^1$. 
                        It is given by $(f,x)$ where \newline $f([0:1]) = [1:0], f([1:0]) = [x:1], f([x:1]) = [0:1]$, and $f([1:1]) = [1:1]$.  
                        So, the map $f$ corresponds to the endomorphism 
                            \[
                                f([U:V]) = [a_3xU^2 + a_1UV - (a_3x^3 + a_1x)V^2: a_3U^2 - (a_3x^3 + a_1x - a_3x - a_1 + a_3)UV ].  
                            \]
                        It is an open subset of $\P^1 \times \A^1$, with coordinates $([a_1:a_3],x)$. 
                        The resultant of $f$ is given by $a_3x(x-1)(x^2 - x + 1)(a_3x^2 + a_1)(a_3x^2 + a_3x + a_1)(a_3x^2+a_3x + a_1 - a_3)$, so that $\Mod{2}{1}{P}{3,1}$ is the complement of six lines in $\A^2$, when taking the affine chart $a_3 =1$.  
                    \item 
                        We embed $\Mod{2}{1}{P}{2,1,1} \subset \mathrm{Rat}_2^1 \times \A^1 \subset \P^5 \times \A^1$. 
                        It is given by the pair $(f,x)$, where $f([1:1]) = [x:1], f([x:1]) = [1:1]$ and $[0:1], [1:0]$ are fixed. 
                        Then, the map $f$ can be expressed as 
                            \[
                                f([U:V]) = [a_0(x+1)U^2 + (a_0x^2 + a_0x - a_4x + a_0)UV: a_4(x+1)UV - x(a_0 + a_4)V^2]. 
                            \]
                        It is an open subset of $\P^1 \times \A^1$, with coordinates $([a_0:a_4],x)$. 
                        The resultant of $f$ is given by $-a_0x(a_0 + a_4)(x+1)^2(a_4x^2 - a_0x)$. 
                        Thus, $\Mod{2}{1}{P}{2,1,1}$ is the complement of four lines in $\A^2$, when taking the affine chart $a_0 = 1$.  
                \end{enumerate}
        \end{proof}
        
        \begin{lemma}
            Let $\mathcal{P}$ be a portrait on five points. 
            Then, either $\Mod{2}{1}{P}{}$ is empty or isomorphic to an open subset of $\A^2$. 
        \end{lemma}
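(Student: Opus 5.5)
Partitions of $5$ that are not a single cycle are $4+1$, $3+2$, $3+1+1$, $2+2+1$, $2+1+1+1$ and $1+1+1+1+1$. I will treat them case by case, in the same way as the previous two lemmas.

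\emph{Empty cases.} A degree two map has at most three fixed points and at most one $2$-cycle. Hence $\mathcal{P}_{2,2,1}$, $\mathcal{P}_{2,1,1,1}$ and $\mathcal{P}_{1,1,1,1,1}$ give empty moduli spaces. The first has two $2$-cycles; the last two have four or more fixed points.

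\emph{Remaining cases.} For $\mathcal{P}_{4,1}$, $\mathcal{P}_{3,2}$ and $\mathcal{P}_{3,1,1}$ I use the $\Aut{\P^1}$ action to normalise three of the points to $[0:1],[1:0],[1:1]$. The other two points are then $[x:1]$ and $[y:1]$. The portrait conditions are five linear equations in $[a_0:\dots:a_5]$, and their coefficients are polynomial in $x,y$. Six homogeneous unknowns with five conditions generically leave a single point of $\P^5$. So I expect the solution to be unique, with coordinates that are rational functions of $(x,y)$. This would give $\Mod{2}{1}{P}{}$ as an open subset of $\A^2_{x,y}$, cut out by the nonvanishing of the resultant, the distinctness of the points, and the nondegeneracy of the linear system.

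I will choose the normalisation so that the linear system is as triangular as possible.
\begin{itemize}
\item For $\mathcal{P}_{3,1,1}$: put the two fixed points at $[0:1]$ and $[1:0]$, and the $3$-cycle at $[1:1]\to[x:1]\to[y:1]\to[1:1]$. Fixing $[1:0]$ gives $a_3=0$, and fixing $[0:1]$ gives $a_2=0$. The three cycle conditions are then linear in $a_0,a_1,a_4,a_5$.
\item For $\mathcal{P}_{3,2}$: use $[0:1]\to[1:0]\to[1:1]\to[0:1]$ for the $3$-cycle and $[x:1]\leftrightarrow[y:1]$ for the $2$-cycle. The first three conditions force $a_5=0$, $a_0=a_3$ and $a_2=0$.
\item For $\mathcal{P}_{4,1}$: use $[0:1]\to[1:0]\to[1:1]\to[x:1]\to[0:1]$ with the fixed point $[y:1]$. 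Alternatively, put the fixed point at $[1:0]$ to simplify.
\end{itemize}
In each case I solve explicitly for $f$, in the same way the paper did for $\mathcal{P}_{3,1}$.

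\emph{Main obstacle.} The delicate step is showing that the map $\Mod{2}{1}{P}{}\to\A^2$, $(f,p_i)\mapsto(x,y)$, is injective with image open. This means checking that the $5\times 6$ coefficient matrix has rank exactly $5$ on the locus where the resultant of the resulting $f$ is nonzero. If the rank dropped there, one would get a positive-dimensional family of maps over a single point $(x,y)$.

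I will handle this by computing the maximal minors and showing that wherever they all vanish, the forced $f$ is degenerate: it has a common factor, or it forces a coincidence of marked points. Then $(x,y)$ determines $f$ uniquely, the inverse map $(x,y)\mapsto f$ is a morphism on the complement of a hypersurface, and the moduli space is that complement in $\A^2$. Nonemptiness follows from exhibiting one explicit parameter value where the resultant is nonzero.
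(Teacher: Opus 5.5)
Your list of empty cases is wrong: $\mathcal{P}_{2,1,1,1}$ does not have four fixed points. It consists of one $2$-cycle and exactly three fixed points. That is within both bounds you quote: a degree two map has three fixed points counted with multiplicity, and exactly one $2$-cycle since $\Phi_{2,f}^*$ has degree $2$. Only $\mathcal{P}_{2,2,1}$ and $\mathcal{P}_{1,1,1,1,1}$ give empty moduli spaces.

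The paper treats $\mathcal{P}_{2,1,1,1}$ as a genuine case. Swapping $[0:1]\leftrightarrow[1:0]$ forces $a_0=a_5=0$. Fixing $[1:1],[x:1],[y:1]$ then gives $f([U:V])=[-(x+y+xy)UV+xyV^2 : U^2-(x+y+1)UV]$, with resultant $xy(x+1)(y+1)(x+y)$ up to sign. For instance, $x=2$, $y=3$ gives $z\mapsto(6-11z)/(z^2-6z)$, which fixes $1,2,3$ and swaps $0,\infty$. So in this case your argument reaches the lemma's conclusion through a false assertion. You need to add $\mathcal{P}_{2,1,1,1}$ to the cases you solve explicitly; it is a nonempty open subset of $\A^2$.

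There is also a slip in your $\mathcal{P}_{3,2}$ normalisation. The condition $f([1:1])=[0:1]$ gives $a_0+a_1+a_2=0$, not $a_2=0$. Together with $a_5=0$, imposing $a_2=0$ would make $U$ divide both coordinates of $f$ (i.e.\ $f([0:1])=[0:0]$). Every solution would then be degenerate, and you would wrongly conclude emptiness; the paper's solution has $a_2=x^2y^2\neq 0$.

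Apart from these two points, your method is the paper's: normalise three points, solve the linear system, then impose the resultant and distinctness conditions. The paper simply records the explicit solutions and resultants.

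Your ``main obstacle'' also has a computation-free resolution. Suppose $f$ is a nondegenerate solution and $g$ is any solution of the same five linear conditions. At each marked point $p_i$, both $(f_1,f_2)(p_i)$ and $(g_1,g_2)(p_i)$ are proportional to a representative of $p_{\Psi(i)}$. So the binary quartic $f_1g_2-f_2g_1$ vanishes at the five distinct marked points, hence is identically zero. Then $\gcd(f_1,f_2)=1$ forces $g$ to be a scalar multiple of $f$. So the system has rank exactly $5$ at every point of the moduli space, and the vector of maximal minors expresses $f$ as a morphism in $(x,y)$ there.
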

        \begin{proof}
            If $\mathcal{P}$ is a portrait on five points, that is not a cycle, then $\mathcal{P} = \mathcal{P}_{4,1}, \mathcal{P}_{3,2}, \mathcal{P}_{3,1,1}, \mathcal{P}_{2,1,1,1}, \mathcal{P}_{2,2,1}, \mathcal{P}_{1,1,1,1,1}$. 
                \begin{enumerate} [label = (\roman*)]
                    \item 
                        If $\mathcal{P} = \mathcal{P}_{2,2,1}$ or $\mathcal{P}_{1,1,1,1,1}$, then $\Mod{2}{1}{P}{} = \emptyset$. 
                    \item 
                        We embed $\Mod{2}{1}{P}{4,1} \subset \mathrm{Rat}_2^1 \times \A^2 \subset \P^5 \times \A^2$. 
                        The points are given by $(f,x,y)$ where $f$ cycles $[0:1], [1:0], [x:1], [y:1]$ and fixes $[1:1]$. 
                        The map $f$ can then be expressed as 
                            \[
                                f([U:V]) = [a_0U^2 + a_1UV +a_2V^2: a_3U^2 + a_4UV]
                            \]
                        where 
                            \begin{align*}
                                a_0 &= x(xy^2 - xy + x - y), \\
                                a_1 &= -x(xy^3 + x^2 - 2xy - y^2 + y), \\ 
                                a_2 &=  xy(xy^2 + x^2 - 3xy + y), \\ 
                                a_3 &= xy^2 - xy + x - y,  \\ 
                                a_4 &= x^3y - 2x^2y^2 - x^3 + x^2y + xy^2 + x^2 - xy - x - y,  \\ 
                            \end{align*}
                        and the resultant of $f$ is $x^2y(x-y)(x-2y)(x-1)(y-1)(xy - x + 1)(x^2 - xy - x + 1)(xy^2 - xy + x - y)(xy^2 + x^2 - 3xy + y)$ so that it is isomorphic to an open affine subset of $\A^2$. 
                    \item 
                        We embed $\Mod{2}{1}{P}{3,2} \subset \mathrm{Rat}_2^1 \times \A^2 \subset \P^5 \times \A^2$. 
                        The points are given by $(f,x,y)$ where $f$ cycles $[0:1],[1:0],[1:1]$ and swaps $[x:1], [y:1]$. 
                        Then, the map $f$ can be expressed as 
                            \[
                                f([U:V]) = [a_0U^2 + a_1UV + a_2V^2: a_0U^2 + a_4UV] 
                            \]
                        where 
                            \begin{align*}
                                a_0 & = xy(x + y + 1), \\
                                a_1 & = - xy(xy + x + y + 1), \\ 
                                a_2 & = x^2y^2, \\ 
                                a_4 & = - (x^2y^3 + x^2y^2 + xy^2 + xy + x + y + 1), \\ 
                            \end{align*}
                        and the resultant of $f$ is $x^4y^4(x-y)(x+1)^2(y+1)^2(xy + x + 1)(xy + y+1)$, so that it is isomorphic to an open affine subset of $\A^2$. 
                    \item 
                        We embed $\Mod{2}{1}{P}{3,1,1} \subset \mathrm{Rat}_2^1 \times \A^2 \subset \P^5 \times \A^2$. 
                        The points are given by $(f,x,y)$ where $f$ cycles $[1:1],[x:1],[y:1]$ and fixes $[0:1],[1:0]$. 
                        Then, the map $f$ can be expressed as 
                            \[
                                f([U:V]) = [a_0U^2 + a_1UV : a_4UV + a_5V^2]
                            \]
                        where 
                            \begin{align*}   
                                a_0 &= x^2y^2 - x^2y + x^2 - xy^2 - xy + y^2, \\
                                a_1 &= x^3y - x^3 - x^2y^3 + xy^3 + xy - y^2, \\
                                a_4 &= x^3y - x^2y^2 - x^2 + x + y^3 - y^2, \\
                                a_5 &= -xy(x^2 - xy + y^2 - x - y + 1), \\
                            \end{align*}
                        and the resultant of $f$ is $x y (x - 1) (y - 1) (x - y) (x y - 1) (x^2 - y) (x - y^2) (x^2 - x y - x + y^2 - y + 1) (x^2 y^2 - x^2 y + x^2 - x y^2 - x y + y^2)$ so that it is isomorphic to an open affine subset of $\A^2$. 
                    \item 
                        We embed $\Mod{2}{1}{P}{2,1,1,1} \subset \mathrm{Rat}_2^1 \times \A^2 \subset \P^5 \times \A^2$. 
                        The points are given by $(f,x,y)$ where $f$ swaps $[0:1],[1:0]$ and fixes $[1:1],[x:1],[y:1]$. 
                        Then, the map $f$ can be expressed as 
                            \[
                              f([U:V]) = [-(x + y + xy)UV + xyV^2: U^2 - (x+y+1)UV]
                            \]
                        where the resultant of $f$ is $xy(x + 1)(y + 1)(x + y)$, so that it is isomorphic to an open affine subset of $\A^2$. 
                \end{enumerate}
        \end{proof}
    Thus, we have shown that for all portraits on $n=3,4,5$ points, the moduli space is a rational surface. 
    \begin{theorem}
        Let $\mathcal{P}$ be a portrait on $n=3,4,5$ points. 
        Then, either $\Mod{2}{1}{P}{}$ is empty or a smooth rational surface. 
    \end{theorem}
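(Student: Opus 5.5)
The plan is to split the portraits on $n=3,4,5$ points into non-cycles and cycles, and to show in each case that the moduli space is an open subset of a smooth rational surface. An open subset of a smooth rational surface is again smooth and rational, so this suffices.

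\textbf{Non-cycles.} For $\mathcal{P}$ not a cycle, Lemmas 3.1, 3.2 and 3.3 apply directly. They give either $\Mod{2}{1}{P}{} = \emptyset$ (for $\mathcal{P}_{2,2}$, $\mathcal{P}_{1,1,1,1}$, $\mathcal{P}_{2,2,1}$, $\mathcal{P}_{1,1,1,1,1}$) or an isomorphism onto the complement in $\A^2$ of the zero locus of the resultant. In the latter case I would make three checks.

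First, the three normalized points $[0:1],[1:0],[1:1]$ kill the $\Aut{\P^1}$ action. Since $\Aut{\P^1}$ acts simply transitively on ordered triples of distinct points, every orbit meets the normalized slice in exactly one point. So the geometric quotient $\mathrm{End}_2^1[\mathcal{P}] \mathbin{/\!/} \Aut{\P^1}$ of Doyle--Silverman is identified with the slice.

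Second, the slice is cut out by the resultant together with the distinctness conditions $p_i \neq p_j$. For example, in the $\mathcal{P}_{4,1}$ case these are $x \neq 0,1$, $y\neq 0,1$, $x\neq y$. This is the complement of a hypersurface in $\A^2$, hence Zariski open in $\A^2$ and automatically smooth.

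Third, open subsets of $\A^2$ are birational to $\P^2$, so they are rational.

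\textbf{Empty cases.} I would justify emptiness by counting. A degree two map has exactly three fixed points with multiplicity. A $2$-cycle is unique, since $\deg \Phi_{2,f}^* = 2$. Hence two disjoint $2$-cycles, or four or more distinct fixed points, are impossible.

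\textbf{Cycles.} For the cycles $\mathcal{P}_3,\mathcal{P}_4,\mathcal{P}_5$, the lemmas do not apply, and I would cite \cite{Blanc_2015} and \cite{manes2009modulispacesfamiliesrational}. There the spaces are shown to be rational, and indeed given as open subsets of $\A^2$ after the same normalization, which yields smoothness.

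If an independent argument for $\mathcal{P}_3$ is wanted, normalize the $3$-cycle to $[0:1]\mapsto[1:0]\mapsto[1:1]\mapsto[0:1]$. This gives three linear conditions on $[a_0:\cdots:a_5]$, so the space is an open subset of $\P^2$. The same normalization handles $\mathcal{P}_4$ with one extra parameter $x$ for the fourth point. The conditions remain linear in the $a_i$ for fixed $x$, giving an open subset of a $\P^1$-bundle over an open subset of $\A^1$. This is again rational and smooth wherever the linear system has constant rank.

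\textbf{Main obstacle.} The main obstacle is the claim of \emph{smoothness} for the cycle cases, and in particular for $\mathcal{P}_5$, which is only known to be rational via the explicit model in \cite{Blanc_2015}. There I would try first to verify that the linear system in the $a_i$ has constant rank on the open set where the points are distinct and the resultant is nonzero. The space is then a projective bundle over a smooth open base, restricted to an open subset, hence smooth. For the non-cycle cases, the only real work is confirming that the explicit resultant factorizations in Lemmas 3.1--3.3 are correct, which is routine.
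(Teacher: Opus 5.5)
Your proposal is correct and follows the paper's route. The non-cycle portraits are handled by Lemmas 3.1--3.3: each space is either empty or an open subset of $\A^2$, hence smooth and rational. The cycles $\mathcal{P}_3,\mathcal{P}_4,\mathcal{P}_5$ are taken from Blanc--Canci--Elkies, exactly as the paper does. The constant-rank check you flag for $\mathcal{P}_5$ does go through: if the kernel of the $5\times 6$ linear system were two-dimensional at a point of the moduli space, a generic member $g$ of that pencil would be a second degree-two map with $g(p_i)=f(p_i)$ at five distinct points, so the binary quartic $f_1g_2-f_2g_1$ would vanish identically and force $g$ to be proportional to $f$.
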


    \section{Different Models of the Moduli Spaces}

    We now consider the case when $\mathcal{P}$ is a portrait on $6$ points.
    In order to classify these moduli spaces, we take advantage of different models.  
    For six points, there are a total of eleven portraits. 
    Recall that we are not considering the case where $\mathcal{P}$ is a cycle. 
    We classify the remaining ten moduli spaces by first showing that four of them yield empty moduli spaces. 

    \begin{lemma}
        Let $\mathcal{P}$ be one of the following portraits, $\mathcal{P}_{2,2,2}, \mathcal{P}_{2,2,1,1}, \mathcal{P}_{2,1,1,1,1}, \mathcal{P}_{1,1,1,1,1,1}$. 
        Then $\Mod{2}{1}{P}{} = \emptyset$. 
    \end{lemma}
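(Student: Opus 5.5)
Each of the four portrait structures contains either two disjoint $2$-cycles or four distinct fixed points. The plan is to show that a degree two map $f$ on $\P^1$ admits neither, and then to use the subportrait relationship: a tuple $(f,p_1,\dots,p_6)$ in $\mathrm{End}_2^1[\mathcal{P}]$ restricts to a tuple in $\mathrm{End}_2^1[\mathcal{P}']$ for any subportrait $\mathcal{P}'$, so emptiness of the smaller space forces emptiness of the larger. Concretely, $\mathcal{P}_{2,2,2}$ and $\mathcal{P}_{2,2,1,1}$ contain $\mathcal{P}_{2,2}$, while $\mathcal{P}_{2,1,1,1,1}$ and $\mathcal{P}_{1,1,1,1,1,1}$ contain $\mathcal{P}_{1,1,1,1}$. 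Lemma 3.2(i) already asserts $\Mod{2}{1}{P}{2,2} = \Mod{2}{1}{P}{1,1,1,1} = \emptyset$, so the lemma follows immediately from it.

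To keep the argument self-contained, I would also justify those two emptiness claims via the dynatomic polynomials. The fixed points of $f$ are the roots of $\Phi_{1,f} = Vf_2 - Uf_1$, a binary form of degree $d+1 = 3$. It is not identically zero, since otherwise $f$ would be the identity, which has degree $1$. Hence $f$ has at most three distinct fixed points.

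For period two, $\Phi_{2,f}$ has degree $d^2+1 = 5$ and is divisible by $\Phi_{1,f}$, so $\Phi_{2,f}^* = \Phi_{2,f}/\Phi_{1,f}$ has degree $2$. I must check that $\Phi_{2,f}$ is not identically zero. Otherwise $f^{(2)}$ would be the identity, which is impossible since $f^{(2)}$ has degree $4$. Every point of exact period $2$ is a root of $\Phi_{2,f}^*$, as the paper notes after Definition 2.3. So $f$ has at most two points of exact period two, i.e.\ at most one $2$-cycle, and $\mathcal{P}_{2,2}$ cannot occur.

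The only delicate point is the claim that points of exact period $n$ are roots of $\Phi_{n,f}^*$ rather than merely of $\Phi_{n,f}$. I would take this as stated in the preliminaries. For $n=2$ it can also be checked directly: a point of exact period two is a root of $\Phi_{2,f}$ but not of $\Phi_{1,f}$, so it must be a root of the quotient $\Phi_{2,f}/\Phi_{1,f}$. The rest is a simple degree count.
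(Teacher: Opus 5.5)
Your proposal is correct and follows the paper's argument: the paper's one-line proof simply invokes that a quadratic map has at most three fixed points and at most one $2$-cycle, and your reduction to the subportraits $\mathcal{P}_{2,2}$ and $\mathcal{P}_{1,1,1,1}$, together with the degree counts $\deg \Phi_{1,f} = 3$ and $\deg \Phi_{2,f}^* = 2$, supplies the details behind that assertion. One small slip, copied from a typo in the paper's preliminaries: the fixed-point form should be $Vf_1 - Uf_2$, not $Vf_2 - Uf_1$ (the latter vanishes where $f(p)$ equals $p$ with its coordinates swapped), though this does not affect the degree count.
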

    \begin{proof}
        We know that there are no degree two rational maps with more than three fixed points or more than one $2$-cycle. 
        Therefore, $\Mod{2}{1}{P}{} = \emptyset$. 
    \end{proof}

    Before we construct a model of these moduli spaces, we establish some notation. 
    Each moduli space, $\Mod{2}{1}{P}{}$, must satisfy open conditions, namely, all the points must be distinct and $\mathrm{Res}(f) \neq 0$.  
    Let $S$ be the closure of $\Mod{2}{1}{P}{}$ in $\A^2$ for the $3,4,5$ point case. 
    The indices of the surface $S$ correspond to which portrait we are considering and use a bar to denote the projective closure. 
    For example, we write $\Mod{2}{1}{P}{4,1} \subset S_{4,1} = \A^2$ and $\bar{S}_{4,1}= \P^2$. 

    \begin{lemma}
        Let $\mathcal{P} = \mathcal{P}_{5,1},\mathcal{P}_{4,2},\mathcal{P}_{4,1,1},\mathcal{P}_{3,3}, \mathcal{P}_{3,2,1}, \mathcal{P}_{3,1,1,1}$. 
        Then, each $\Mod{2}{1}{P}{}$ is isomorphic to  an open subset of a hypersurface in $\A^3$. 
    \end{lemma}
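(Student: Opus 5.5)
Each six-point portrait in the list contains a five-point subportrait obtained by deleting one point of a cycle, and that subportrait was handled in the lemma on five-point portraits. The matching is
\[
\mathcal{P}_{5,1}\supset\mathcal{P}_{4,1},\quad \mathcal{P}_{4,2}\supset\mathcal{P}_{4,1}\ \text{(or }\mathcal{P}_{3,2}\text{ via a different normalization)},\quad \mathcal{P}_{4,1,1}\supset\mathcal{P}_{4,1},
\]
\[
\mathcal{P}_{3,3}\supset\mathcal{P}_{3,2}\text{-type data},\quad \mathcal{P}_{3,2,1}\supset\mathcal{P}_{3,2},\quad \mathcal{P}_{3,1,1,1}\supset\mathcal{P}_{3,1,1}.
\]
More uniformly, I would pick a subportrait $\mathcal{Q}$ on five points among the $3,4,5$-point models already computed, for which $S_{\mathcal{Q}}=\A^2$ with coordinates $(x,y)$ and an explicit map $f_{x,y}$. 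The forgetful map $\Mod{2}{1}{P}{}\to \Mod{2}{1}{Q}{}$, dropping the sixth point, is then the natural projection.

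The sixth point is $p_6=[z:1]$, which adds one coordinate $z$, so the model lives in $\A^3$ with coordinates $(x,y,z)$. The portrait condition on $p_6$ is one of two kinds.

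\textbf{The sixth point completes a cycle.} Suppose the other points of that cycle are already among $p_1,\dots,p_5$, say $f(p_5)=p_6$ and $f(p_6)=p_j$. Then $p_6=f_{x,y}(p_5)$ is determined, and $f_{x,y}(p_6)=p_j$ is a single polynomial equation $F(x,y)=0$ after clearing denominators. This already gives a curve in $\A^2$, which is too small for a surface. So I would instead drop a point whose removal leaves only $4$ points parametrized by $\A^1$, keeping $(f,x,y,z)$ with $f$ depending on fewer normalized points.

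\textbf{The sixth point is a new fixed point.} This is the case for $\mathcal{P}_{5,1}$ from $\mathcal{P}_5$, $\mathcal{P}_{4,1,1}$ from $\mathcal{P}_{4,1}$, and $\mathcal{P}_{3,1,1,1}$ from $\mathcal{P}_{3,1,1}$. Here the condition is a single equation $z\,f_2(z,1)-f_1(z,1)=0$ in $(x,y,z)$, which is a hypersurface in $\A^3$ directly.

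So the cleanest uniform plan is to normalize three points to $[0:1],[1:0],[1:1]$ and introduce three free coordinates $x,y,z$ for the remaining three points. Imposing $f(p_i)=p_{\Psi(i)}$ for all six points gives $12$ linear conditions on the six coefficients $a_0,\dots,a_5$, projectively. The plan has three steps:
\begin{enumerate}
\item Choose which three points to normalize so that $f$ is solved linearly from some of the conditions, as in the five-point lemmas. Its coefficients become polynomials in $(x,y,z)$, or in $(x,y)$ with $z$ determined.
\item Observe that the remaining conditions reduce to the vanishing of one determinant, a maximal minor of the $6\times 6$ linear system in the $a_i$. This is a single polynomial $G(x,y,z)$.
\item Define the open subset of $\{G=0\}$ by $\mathrm{Res}(f)\neq 0$, the distinctness conditions $x,y,z\notin\{0,1\}$ and $x\neq y$ and so on, and the requirement that the relevant minors not all vanish. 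On this set the solution $f$ is unique up to scaling. The map $(f,p_1,\dots,p_6)\mapsto (x,y,z)$ is then an isomorphism onto this set, with inverse given by Cramer's rule.
\end{enumerate}

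Because the $\Aut{\P^1}$-action is free and transitive on ordered triples of distinct points, this normalization produces a global slice, so the quotient is identified with this locus.

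The main obstacle is step 3: showing the linear system has rank exactly $5$ on the open set. This would make the solution a unique point of $\P^5$, so the coordinates $(x,y,z)$ determine $f$ and the locus is a genuine hypersurface rather than something of larger codimension. I would handle it case by case, exhibiting an explicit $5\times 5$ minor whose vanishing forces either a coincidence of points or $\mathrm{Res}(f)=0$. I would also need to check, for each portrait, that $G$ is not identically zero and is not a unit on the relevant open set.
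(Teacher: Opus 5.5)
Your final ``uniform plan'' is the paper's proof: normalize three points to $[0:1],[1:0],[1:1]$, put the others at $[x:1],[y:1],[z:1]$, cut out the locus by $\det M=0$ for the $6\times 6$ linear system in $a_0,\dots,a_5$, and remove the loci where points collide or $\mathrm{Res}(f)=0$. Two corrections: that system has one linear condition per point, so six, not twelve; and the preliminary subportrait matching should simply be discarded, since for instance $\mathcal{P}_{4,1}$ is not a subportrait of $\mathcal{P}_{5,1}$ or of $\mathcal{P}_{4,2}$ (here subportraits are unions of cycles).

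The rank-$5$ condition you call the main obstacle, which the paper passes over silently, needs no case-by-case minors. If $f\in\ker M$ has $\mathrm{Res}(f)\neq 0$ and $g\in\ker M$ is arbitrary, then $f_1g_2-f_2g_1$ is a binary quartic vanishing at the six distinct $p_i$, hence zero. Coprimality of $f_1,f_2$ then forces $g$ to be a scalar multiple of $f$.
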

    \begin{proof}
        Recall that after the $\Aut{\P^1}$ action, the points in $\Mod{2}{1}{P}{}$ are given by $(f, [0:1],[1:0],[1:1],[x:1],[y:1],[z:1])$ with the condition that $f(p_i) = p_{\Psi(i)}$. 
        This condition is not linear in the points but is linear in the coefficients of $f$. 
        
        We build the matrix equation $M\vec{a} = 0$ where $\vec{a} = [a_0:a_1:a_2:a_3:a_4:a_5]^T$, $a_i$ are the coefficients of $f$ from definition 2.2, and $M\vec{a}$ is derived from $f(p_i) = p_{\Psi(i)}$.  
        To illustrate, the following matrix equation is for $\Mod{2}{1}{P}{5,1}$
            \[
                \begin{bmatrix}
                    0 & 0 & 0 & 0 & 0 & -1 \\
                    1 & 0 & 0 & -1 & 0 & 0 \\
                    1 & 1 & 1 & -x & -x & -x \\
                    x^2 & x & 1 & -x^2y & -xy & -y \\
                    y^2 & y & 1 & 0 & 0 & 0 \\
                    z^2 & z & 1 & -z^3 & -z^2 & -z \\
                \end{bmatrix}
                \begin{pmatrix}
                    a_0 \\
                    a_1 \\
                    a_2 \\
                    a_3 \\
                    a_4 \\
                    a_5 \\
                \end{pmatrix} 
                = 
                \begin{pmatrix}
                    0 \\ 
                    0 \\ 
                    0 \\
                    0 \\
                    0 \\
                    0 \\
                \end{pmatrix}  .
            \]
        If this matrix is full rank, then we only have the trivial solution, $\vec{a} = \vec{0}$. 
        The zeros of $\det{(M)}$ determine when the matrix drops in rank, which also defines a sextic surface in $\A^3$. 
        For each portrait $\mathcal{P}$, let $\mathcal{S}$ be the corresponding (possibly reducible) closed surface in $\A^3$. 

        Any tuple $(f, [0:1],[1:0],[1:1],[x:1],[y:1],[z:1]) \in \Mod{2}{1}{P}{}$ will always satisfy the determinant equation. 
        However, it is not necessarily true that every point on $\mathcal{S}$ corresponds to a tuple in $\Mod{2}{1}{P}{}$. 
        We require that none of the points in the tuple are the same; this guarantees that the points in the cycle do not collide. 
        We also require that the resultant of $f$ be nonzero so that the degree of $f$ is still two, where the coefficients of $f$ are determined by $x,y,z$. 
        These are open conditions in $\A^3$, so that $\Mod{2}{1}{P}{}$ is an open subset $\mathcal{S}$. 
        Following the convention above, let $S$ be the closure of $\Mod{2}{1}{P}{}$ in $\A^3$, which is an irreducible component of $\mathcal{S}$. 
        This is equivalent to removing the planar condition(s) from $\mathcal{S}$.  
        Following the procedure, we have that 
            \begin{itemize} 
                \item $\Mod{2}{1}{P}{5,1}, \Mod{2}{1}{P}{3,3}$  are isomorphic to open subsets of a sextic surface in  $\A^3$. 
                \item $\Mod{2}{1}{P}{4,2}, \Mod{2}{1}{P}{4,1,1}, \Mod{2}{1}{P}{3,2,1}$ are isomorphic to open subsets of  a quintic surface in $\A^3$. 
                \item $\Mod{2}{1}{P}{3,1,1,1}$ is isomorphic to an open subset of a cubic surface in  $\A^3$. 
            \end{itemize}
    \end{proof}

    This is not the only model for these moduli spaces. 
    We can also exhibit them as covering spaces via projection maps. 
    The projection maps are useful because they arise naturally from subportraits. 
    It is always possible to remove a subportrait condition; it is possible to recover some of the information after removing a subportrait condition. 
    That is, there is an explicit description to find the points in the fiber. 
    The fibers of the projection map depends heavily on how many distinct fixed points/cycles a degree two rational map can have. 

    \begin{lemma}
        Consider the portrait $\mathcal{P}_{n,1}$, where $n \geq 5$. 
        Then, there is a map from $S_{n,1} \to S_n$, which is generically $3$-to-$1$, where $S_{n,1}, S_n$ are defined by the cycle conditions shown in \cite{Blanc_2015}, and the branch locus is given by the discriminant of $\Phi_{1,f}^*([U:1])$. 
    \end{lemma}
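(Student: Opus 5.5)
The map is the forgetful map. A point of $\Mod{2}{1}{P}{n,1}$ is a tuple $(f,p_1,\dots,p_n,p_{n+1})$, where $p_1,\dots,p_n$ form an $n$-cycle and $f(p_{n+1})=p_{n+1}$. I would define $\pi:\Mod{2}{1}{P}{n,1}\to\Mod{2}{1}{P}{n}$ by $(f,p_1,\dots,p_{n+1})\mapsto(f,p_1,\dots,p_n)$. This is well defined because the subportrait $\mathcal{P}_n\subset\mathcal{P}_{n,1}$ is preserved, and it is $\Aut{\P^1}$-equivariant, so it descends to the quotients.

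In the normalized coordinates of \cite{Blanc_2015}, the first three cycle points are $[0:1],[1:0],[1:1]$. In the $\mathcal{P}_{n,1}$ model I would use the same normalization on the $n$-cycle and place the fixed point at the extra coordinate $[z:1]$. Then $\pi$ is literally the coordinate projection forgetting $z$. The coefficients $\vec a$ of $f$ are determined by the cycle conditions alone, since they are the kernel of the $n$-cycle part of the matrix $M$. So $\pi$ extends to a morphism on the closures $S_{n,1}\to S_n$.

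For the degree of $\pi$, fix a general point $(f,p_1,\dots,p_n)\in\Mod{2}{1}{P}{n}$. Its fiber consists of the points $[z:1]$ with $f([z:1])=[z:1]$ and $[z:1]\neq p_i$.
\begin{itemize}
\item The fixed points of $f$ are the roots of the cubic $\Phi_{1,f}^*([U:V])=\Phi_{1,f}([U:V])$.
\item No $p_i$ is fixed, since it has exact period $n\ge 5>1$. So the distinctness condition on the points removes nothing.
\item $[1:0]$ lies in the $n$-cycle, so it is not fixed. Hence the cubic has no root at infinity, and the fixed points are exactly the roots of the dehomogenized cubic $\Phi_{1,f}^*([U:1])$ in $U$.
\end{itemize}
The fiber therefore has at most $3$ points, with equality exactly when this cubic has three distinct roots, i.e. when $\mathrm{disc}\,\Phi_{1,f}^*([U:1])\neq 0$, where the discriminant is a polynomial in the coordinates of $S_n$ through $\vec a$. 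The resultant condition $\mathrm{Res}(f)\ne0$ is the same on source and target. Hence every point of $\Mod{2}{1}{P}{n}$ off the discriminant locus has exactly three preimages, and the branch locus is contained in the discriminant locus.

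It remains to show that $\pi$ is generically $3$-to-$1$ and that the branch locus is the whole discriminant locus.
\begin{itemize}
\item The discriminant must not vanish identically on $S_n$. By Milnor's description \cite{milnor1993geometry}, the fixed-point multipliers give coordinates on $\mathrm{Rat}_2^1/\!/\Aut{\P^1}$. I would argue that the image of $\Mod{2}{1}{P}{n}$ there is the dimension-$2$ locus of maps with an $n$-cycle, so it is dense. A generic quadratic map has three distinct fixed points (multiple fixed point means some multiplier equals $1$), hence the discriminant is generically nonzero. Alternatively, I would exhibit one explicit map with an $n$-cycle and distinct fixed points.
\item Over a point where the discriminant vanishes, two fixed points coincide. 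The fiber then has fewer than three points and $\pi$ is ramified there (a double root of the cubic is a ramification point of the projection from the curve $\{\Phi_{1,f}^*([z:1])=0\}$). So the branch locus is exactly the discriminant locus.
\end{itemize}

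I expect the main obstacle to be the compatibility between closures and open sets: checking that $S_{n,1}$ is indeed the component lying over $S_n$ with fibers cut out precisely by the fixed-point cubic, and that no extraneous components appear when passing to closures. I would handle this by working on the open sets where $\mathrm{Res}(f)\ne0$ and points are distinct, where the above is clean, and stating the degree and branch locus generically there.
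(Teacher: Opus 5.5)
Your proposal is correct and takes the same route as the paper: you realize the forgetful map as the coordinate projection $S_{n,1}\to S_n$, identify its fiber with the fixed points of $f$ (the roots of the cubic $\Phi_{1,f}^*([U:1])$), and read off the branch locus from the discriminant of that cubic. You also supply three points the paper leaves implicit: $[1:0]$ lying in the $n$-cycle forces the cubic to have degree exactly three in the chart $V=1$; no cycle point can be fixed; and the discriminant does not vanish identically on $S_n$.
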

    \begin{proof}
        Suppose that $n \geq 5$. 
        By \cite{Blanc_2015}, we have that $\Mod{2}{1}{P}{n} \subset \A^{n-3}$ is an open subvariety of $S_n$. 
        We also have that the fixed point equation defines a hypersurface, $H \subset\A^{n-2}$. 
        Then, the surface $S_{n,1} = H \cap S_n \subset \A^{n-3}$, where we are considering $S_n$ in $\A^{n-2}$ via $(x_1, \dots,x_{n-3}) \hookrightarrow (x_1, \dots, x_{n-3}, 0)$. 
        
        Let $(f, [0:1], [1:0], [1:1], p_4, \dots, p_n, \tilde{p}) \in \Mod{2}{1}{P}{n,1}$. 
        There is a point in $S_{n,1} \subset \A^{n-2}$ corresponding to this tuple 
        Then, the map, 
            \begin{align*}
                \Mod{2}{1}{P}{n,1} &\longrightarrow \Mod{2}{1}{P}{n} \\ 
                 (f, [0:1], [1:0], [1:1], p_4, \dots, p_n, \tilde{p}) &\longmapsto (f, [0:1], [1:0], [1:1], p_4, \dots, p_n)
            \end{align*}
        is the restriction of the projection map $S_{n,1} \to S_n$, $(x_1, \dots, x_{n-3}, x_{n-2}) \longmapsto (x_1, \dots, x_{n-3})$. 

        This projection map is ramified along a curve in $\A^{n-2}$ and has a corresponding branch locus in $\A^{n-3}$. 
        Since any degree two rational map has three fixed points, counting multiplicity, we have that this is generically a $3$-to-$1$ map. 
        The branch locus is given by the points in $\A^{n-3}$ whose fibers have less than three elements. 
        This occurs at the zeros of the discriminant of $\Phi_{1,f}^*([U:1])$ where taking the affine chart $V=1$ poses no problems. 
        Since all the coefficients of $f$ are given in terms of $x_1,x_2,x_{n-3}$, the branch locus is the intersection of the hypersurface defined by the discriminant and the surface $S_n \subset \A^{n-3}$. 
    \end{proof}
    
    \begin{lemma}
        Consider the portrait $\mathcal{P}_{n,m}$, with $n \geq 5$ and $m>1$.  
        Then, there is a natural projection map $\Mod{2}{1}{P}{n,m} \to \Mod{2}{1}{P}{n}$ where the branch locus is given by the discriminant of $\Phi_{n,f}^*([U:1])$. 
    \end{lemma}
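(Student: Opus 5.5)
The plan is to follow the proof of the preceding lemma for $\mathcal{P}_{n,1}$, with the fixed-point polynomial replaced by the appropriate dynatomic polynomial. One point in the statement needs care: the second cycle has length $m$, so the natural discriminant is that of $\Phi_{m,f}^*([U:1])$, written $\Phi^*_{n,f}$ in the statement. I would treat the index as that of the cycle being forgotten and prove the statement in that form.

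\textbf{Step 1: construct the map.} Given a tuple
\[
(f,[0:1],[1:0],[1:1],p_4,\dots,p_n,\tilde p_1,\dots,\tilde p_m)\in\Mod{2}{1}{P}{n,m},
\]
forget $\tilde p_1,\dots,\tilde p_m$. The result is a tuple in $\mathrm{End}_2^1[\mathcal{P}_n]$: the first $n$ points still form an $n$-cycle of $f$ and are pairwise distinct, and $\mathrm{Res}(f)\neq 0$ still holds. Forgetting points commutes with the $\Aut{\P^1}$-action, so the map descends to the geometric quotients of \cite{Doyle_2020}. In the normalized coordinates of \cite{Blanc_2015}, where $\Mod{2}{1}{P}{n}$ is an open subset of $S_n\subset\A^{n-3}$, the map is the restriction of the coordinate projection $(x_1,\dots,x_{n-3},w_1,\dots,w_m)\mapsto(x_1,\dots,x_{n-3})$. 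Here $\tilde p_j=[w_j:1]$, and the affine chart is harmless because $[1:0]$ already lies in the $n$-cycle. So the map is a morphism, and it is natural because it comes from the subportrait $\mathcal{P}_n\subset\mathcal{P}_{n,m}$.

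\textbf{Step 2: describe the fibres.} The coefficients of $f$ are regular functions of $(x_1,\dots,x_{n-3})$ on $S_n$. Over a point of $\Mod{2}{1}{P}{n}$, the fibre therefore consists of the ordered $m$-cycles of $f$ that are disjoint from the given $n$-cycle. For $m\neq n$ this second condition is automatic. Each such $m$-cycle is cut out by the vanishing of $\Phi_{m,f}^*([U:1])$ together with $\tilde p_{j+1}=f(\tilde p_j)$. Consequently the whole fibre is determined by the choice of $\tilde p_1$ among the roots of $\Phi_{m,f}^*$. The degree of the map is bounded by the number of $m$-periodic points, counted as in Section 2.

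\textbf{Step 3: locate the branch locus.} The fibre cardinality drops exactly when the roots of $\Phi_{m,f}^*([U:1])$ fail to be distinct. That happens precisely on the zero set of its discriminant, viewed as a polynomial in $(x_1,\dots,x_{n-3})$ and restricted to $S_n$. Away from this hypersurface the $m$-periodic points vary as simple roots. By the implicit function theorem the projection is then étale, so it can only ramify over $\{\operatorname{disc}=0\}\cap S_n$.

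\textbf{Main obstacle.} The hard part is the converse direction and the degeneracies at the boundary. A multiple root of $\Phi_{m,f}^*$ can also come from a point of exact period $k\mid m$, $k<m$: by the Manes example cited above, such points can be roots of $\Phi_{m,f}^*$. These points must be accounted for, and I need to check that the discriminant locus is not contained in the removed locus (resultant zero or colliding points). My first approach would be to check this directly on the explicit equations of $S_n$ from \cite{Blanc_2015} for $n=5,6$. Failing that, I would argue that every component of the discriminant hypersurface meets $\Mod{2}{1}{P}{n}$. This would use the fact that generic maps with a degenerate $m$-cycle still have resultant nonzero.
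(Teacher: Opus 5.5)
Your construction of the map, the fibre count by ordered $m$-cycles, and the use of the discriminant of $\Phi^*_{m,f}([U:1])$ are the same as the paper's argument. Your reading of the index as $m$ is also right: the paper's proof uses $\Phi^*_{m,f}$ and writes $\Phi^*_{n,f}$ only in its last sentence. Your argument that the map is \'etale off $\{\operatorname{disc}=0\}\cap S_n$ is fine, and it is more than the paper writes.

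\textbf{The gap.} The gap is the converse, which you leave open. The route you sketch for it cannot work, because you are working with the map of open moduli spaces, and there the converse is false.
\begin{itemize}
\item Consider a component of the discriminant locus where an $m$-cycle collapses onto a $k$-cycle with $k\mid m$, $k<m$. The limiting fibre points violate the distinctness condition, so they are simply absent from $\Mod{2}{1}{P}{n,m}$. Nothing ramifies there; the map just fails to be proper.
\item For $m=2$ this is the only way $\operatorname{disc}\Phi^*_{2,f}$ can vanish over $\Mod{2}{1}{P}{n}$, since a double root $p$ of the quadratic $\Phi^*_{2,f}$ must satisfy $f(p)=p$. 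So $\Mod{2}{1}{P}{n,2}\to\Mod{2}{1}{P}{n}$ is \'etale everywhere. Yet the discriminant vanishes on a curve in $\Mod{2}{1}{P}{n}$, namely the maps with a fixed point of multiplier $-1$.
\item Hence showing that every component of the discriminant hypersurface meets $\Mod{2}{1}{P}{n}$ would not produce any ramification. A drop in fibre cardinality forces branching only for a finite map.
\item Conversely, "not contained in the removed locus" is not what the paper needs. Its branch loci include boundary components, such as the line $x=0$ in the $\mathcal{P}_{4,1,1}$ lemma.
\end{itemize}

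\textbf{The fix.} Read the statement, as the paper does, for closures. The paper's proof works with $S_{n,m}\to S_n$. In the $\mathcal{P}_{4,2}$ lemma it first extends the double cover to $S_{4,2}\to S_4$, and by the $m=2$ remark no point of $\Mod{2}{1}{P}{4,2}$ itself is a ramification point of that cover.

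Restrict to the part of $S_n$ where $f_x$ has degree two and the leading coefficient of $\Phi^*_{m,f_x}([U:1])$ is nonzero. (For $m=n$, first divide out the factors vanishing on the marked cycle.) There the closure of $\Mod{2}{1}{P}{n,m}$ is the cover $Z=\{(x,w):\Phi^*_{m,f_x}([w:1])=0\}$, with the rest of the cycle given by $w_{j+1}=f_x(w_j)$. Thus $Z$ is the relative spectrum of $\mathcal{O}_{S_n}[w]/(P)$ with $P$ monic.

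Such a finite flat cover is \'etale over $x$ if and only if its fibre $\C[w]/(P_x)$ is reduced, that is, if and only if $\operatorname{disc}P(x)\neq 0$. This gives both inclusions at once, with no need to inspect the equations of $S_5$ or $S_6$. The Manes-type roots of lower exact period need not be excluded: they are exactly the ramification points of $Z\to S_n$ over the collapse components.
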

    \begin{proof}
        We follow a similar format but now, we must keep track of how many distinct $m$-cycles a rational map of degree two can have. 
        Since $n \geq 5$, we can solve for the coefficients of $f$ in terms of the $n$-cycle. 
        Thus, $S_n \subset \A^{n-3}$. 
        We consider $S_n \subset \A^{n + m - 3}$ via the standard inclusion. 
        Then, each condition from the $m$-cycle defines a hypersurface. 
        Let $H_i$ be the hypersurface derived from $f(p_i) = p_{\Psi(i)}$ for $n < i \leq n+m$. 
        Then, we have $m$ hypersurfaces in $\A^{n+m-3}$. 
        Intersecting $m$ hypersurfaces along with $S_n \subset \A^{n+m-3}$, we have the surface $S_{n,m} = S_n \cap \bigcap_{i=n+1}^{n+m} H_i$. 
        
        Then, the map 
            \begin{align*}
                \Mod{2}{1}{P}{n,m} &\longrightarrow \Mod{2}{1}{P}{n} \\ 
                (f, [0:1], [1:0], [1:1], p_4, \dots, p_{n+m}) &\longmapsto (f, [0:1], [1:0], [1:1], p_4, \dots, p_n)
            \end{align*}
        is the restriction of the projection map $S_{n,m} \to S_n$, $(x_1, \dots, x_{n+m-3}) \longmapsto (x_1, \dots, x_{n-3})$. 

        In the fixed point case, we had three possible choices because there were three solutions, counting multiplicity, to the fixed point equation. 
        In the $m$-cycle case, it is possible to have more than one $m$-cycle. 
        If $f$ only has one $m$-cycle, then up to reordering, we have $m$ different elements in the fiber. 
        If $f$ has $k$ distinct $m$-cycles, then we have $km$ different elements in the fiber.  

        This map is ramified when the cycle degenerates. 
        The discriminant of $\Phi_{m,f}^*([U:1])$ identifies when the solutions to $f^{(m)}(p)=p$ collide. 
        It also defines a hypersurface in $\A^{n-3}$, in terms of $x_1,x_2,x_{n-3}$. 
        Thus, intersecting the hypersurface defined by the discriminant of $\Phi_{n,f}^*([U:1])$ with the surface $S_n$ gives the branch locus.  
    \end{proof}

    \section{Classifying the Surfaces}

    We now start to classify the surfaces for portraits with six points. 
    Using the tools of birational geometry, we can resolve the singularities if we realize it as a surface in $\A^3$, use the projection maps and the branch loci, or find a new model in the space of six ordered points, $P_1^6$. 


    \begin{lemma}
        The surface $\mathcal{M}_2^1[\mathcal{P}_{3,1,1,1}]$ is rational. 
        Further, $S_{3,1,1,1}$ is a degree three weak del Pezzo surface. 
    \end{lemma}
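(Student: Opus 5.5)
We propose two complementary arguments: one shows rationality directly via a projection, the other identifies the geometry of the cubic model. For rationality, consider the forgetful map $\Mod{2}{1}{P}{3,1,1,1} \to \Mod{2}{1}{P}{3,1,1}$ obtained by dropping the third fixed point. By the earlier lemma on five-point portraits, $\Mod{2}{1}{P}{3,1,1}$ is an open subset of $\A^2$ with coordinates $(x,y)$. A degree two map has exactly three fixed points counted with multiplicity, and two of them, $[0:1]$ and $[1:0]$, are already prescribed. So the third fixed point is determined uniquely and rationally by $f$. Concretely, in the normal form $f([U:V]) = [a_0U^2 + a_1UV : a_4UV + a_5V^2]$ with the $a_i$ polynomials in $x,y$, the fixed point equation $Vf_1 - Uf_2 = UV\big((a_0 - a_4)U + (a_1 - a_5)V\big)$ gives the third fixed point $[a_5 - a_1 : a_0 - a_4]$.

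Hence the projection is injective, with rational inverse $(x,y) \mapsto (x,y,z)$ where $z = (a_5 - a_1)/(a_0 - a_4)$. Its image is the open set where this point is distinct from the other five points and is not $[1:0]$. It is nonempty, which we check at one explicit $(x,y)$. Therefore $\Mod{2}{1}{P}{3,1,1,1}$ is isomorphic to an open subset of $\A^2$ and is rational. To match the normalization of the lemma on hypersurfaces in $\A^3$, where the six points are $[0:1],[1:0],[1:1],[x:1],[y:1],[z:1]$ with the $3$-cycle placed among them, we may need to move the cycle by an element of $\Aut{\P^1}$. This is only a birational change of coordinates and does not affect rationality.

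For the del Pezzo statement, we take the cubic $S = S_{3,1,1,1} \subset \A^3$ from the determinant construction, after discarding the planar components, and pass to its closure $\bar S \subset \P^3$. The plan is as follows.
\begin{enumerate}
\item Write down the cubic explicitly.
\item Check that it is irreducible and not a cone; its rationality is then consistent with the first part.
\item Compute its singular locus, both in $\A^3$ and at infinity, by the Jacobian criterion.
\item Show that every singular point is isolated and of type $A_n$ (a rational double point). We verify this by the local blow-up procedure fixed in the preliminaries: in the appropriate chart, the strict transform has at worst another $A_{n-2}$ double point, and the exceptional curves are $(-2)$-curves.
\end{enumerate}
A normal cubic surface in $\P^3$ with only rational double points has anticanonical class equal to the hyperplane class, since $K_{\bar S} = (K_{\P^3} + \bar S)|_{\bar S} = -H$. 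Its minimal resolution $\tilde S$ is crepant, so $-K_{\tilde S}$ is the pullback of $H$, which is nef and big with $K_{\tilde S}^2 = 3$. Hence $\tilde S$ is a weak del Pezzo surface of degree three.

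The main obstacle is the singularity analysis in step (4). It requires the explicit cubic, locating all singular points (including those on the plane at infinity), and confirming that none is worse than a rational double point; in particular we must rule out a non-isolated singular line and an elliptic singularity, which would mean a cone over a cubic curve. We would first compute the Hessian rank at each singular point. Rank $2$ gives $A_n$, and $n$ is then determined by iterated blow-ups. If the Hessian rank drops to $1$, we would instead use the explicit coordinate blow-ups to determine the type.
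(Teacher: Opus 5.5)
Your proof is correct. It splits into a part that takes a different route from the paper and a part that follows the paper's route.

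\textbf{Rationality.} You use the forgetful map $\Mod{2}{1}{P}{3,1,1,1}\to\Mod{2}{1}{P}{3,1,1}$. Once two fixed points are marked, the third is the rational point $[a_5-a_1:a_0-a_4]$, so this map is an isomorphism onto an open subset of $\A^2$. The paper never argues rationality separately. It only identifies the weak del Pezzo structure and reads off $\kappa=-\infty$; rationality is implicit, since weak del Pezzo surfaces are rational. Your route is more elementary and independent of any singularity analysis. It also gives the stronger conclusion that the moduli space is an open subset of $\A^2$, matching the pattern of Section 3. In the paper's own coordinates your argument amounts to noticing that the cubic $xyz-xy-xz-yz+1=0$ is linear in $z$. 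Hence $z=(xy-1)/(xy-x-y)$, and projection to the $(x,y)$-plane is birational.

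\textbf{Weak del Pezzo.} Here you follow the paper's strategy: locate the singularities, resolve them, and use adjunction to get $-K$ equal to the pulled-back hyperplane class with $K^2=3$. You phrase it through crepant resolutions of rational double points, where the paper explicitly blows up $\P^3$ at three points. As written, though, step (4) is only a plan; it is not carried out. The missing computation is short:
\begin{itemize}
\item The affine cubic is smooth.
\item The plane $w=0$ meets $\overline{S}_{3,1,1,1}$ in the triangle $xyz=0$.
\item The only singular points are the triangle's vertices $[1:0:0:0]$, $[0:1:0:0]$, $[0:0:1:0]$.
\item In the chart $x=1$ the quadratic part is $yz-yw-zw$, which is nondegenerate. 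So these are $A_1$ nodes, with Hessian rank $3$. Your triage listed only ranks $2$ and $1$ and omitted this case, which is the one that actually occurs.
\end{itemize}
Consequently there is no cone and no elliptic singularity to rule out. A single blow-up at each node resolves the surface. This is exactly the paper's computation
$K_{\overline{S}'_{3,1,1,1}}=\bigl(-4\widetilde{H}+2\sum E_i+3\widetilde{H}-2\sum E_i\bigr)\restr{\overline{S}'_{3,1,1,1}}=-\widetilde{H}\restr{\overline{S}'_{3,1,1,1}}$,
giving $K^2=\widetilde{H}^2\cdot\overline{S}'_{3,1,1,1}=3$.
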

    \begin{proof}
        We use the model for this surface in $\mathbb{A}^3$. 
        The surface $S_{3,1,1,1}$ is the cubic surface defined by 
            \[
                0 = xyz - xy - xz - yz + 1. 
            \]
        Taking the projective closure, with coordinates $[x:y:z:w]$, we see that $\overline{S}_{3,1,1,1}$ only has the following double points
            \begin{align*}
                p_1 &= [1:0:0:0], & p_2 &= [0:1:0:0], & p_3 &= [0:0:1:0]. 
            \end{align*}
        Denoting by $\pi: \Bl_3 \P^3 \to \P^3$ the blow up of the three points, the strict transform $\overline{S}_{3,1,1,1}'$ of $\overline{S}_{3,1,1,1}$ is a smooth surface. 
        Let $E_i$ be the exceptional divisor from blowing up $p_i$ according to the order above and $\widetilde{H}$ be the pullback of a hyperplane in $\P^3$. 
        The canonical class of $\Bl_3 \P^3$ is $K_{\Bl_3 \P^3} = -4\widetilde{H} + 2 E_1 + 2E_2 + 2E_3$ and $\overline{S}_{3,1,1,1}'$ is equivalent to $3\widetilde{H} - 2E_1 - 2E_2 - 2E_3$. 
        The adjunction formula gives $K_{\overline{S}_{3,1,1,1}'} = -\widetilde{H}\restr{\overline{S}_{3,1,1,1}'}$. 

        We see that $K_{\overline{S}_{3,1,1,1}'}^2 = 3$, so that $\overline{S}_{3,1,1,1}'$ is a degree three weak del Pezzo surface with $-K_{\overline{S}_{3,1,1,1}'}$ big and nef. 
        Thus, $\Mod{2}{1}{P}{3,1,1,1}$ has Kodaira dimension $-\infty$. 
    \end{proof} 


    \begin{lemma}
        The surface $\Mod{2}{1}{P}{4,2}$ has Kodaira dimension $0$. 
    \end{lemma}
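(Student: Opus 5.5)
We will prove that $\Mod{2}{1}{P}{4,2}$ is birational to a K3 surface (or possibly an abelian or Enriques surface); any of these has Kodaira dimension $0$. We use the quintic model from Lemma 4.2, with one choice of normalization. Put the $4$-cycle at $[0:1]\mapsto[1:0]\mapsto[1:1]\mapsto[x:1]\mapsto[0:1]$ and the $2$-cycle at $[y:1]\leftrightarrow[z:1]$. The four cycle conditions on the $4$-cycle are linear in $\vec a$. Together with the two conditions from the $2$-cycle they give the $6\times 6$ matrix $M$. After discarding the planar factors coming from collisions of points, $\det M=0$ cuts out the quintic $S_{4,2}\subset\A^3$.

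A better model uses the structure of the $2$-cycle. Solve for $f$ from the $4$-cycle alone; this is a one-parameter family over $S_4$, which is rational by \cite{Blanc_2015}. For each such $f$, the points $y,z$ are the two roots of the quadratic $\Phi_{2,f}^*([U:1])$. The map from $\Mod{2}{1}{P}{4,2}$ forgetting the $2$-cycle is then generically $2$-to-$1$ onto a rational surface, as in Lemma 4.4 with $m=2$, since a degree two map has exactly one $2$-cycle. Thus $S_{4,2}$ is birational to a double cover $w^2=\Delta$ of a rational surface. Here $\Delta$ is the discriminant of $\Phi_{2,f}^*$, written in the two affine coordinates of $S_4$ (say $x$ and the free coefficient parameter).

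The key steps are as follows.
\begin{enumerate}[label=(\roman*)]
\item Compute $\Delta$ explicitly and strip off its square factors to get the reduced branch curve $B$.
\item Choose a compactification of the base, e.g.\ $\P^2$ or $\P^1\times\P^1$, where $B$ has degree $6$, respectively bidegree $(4,4)$, after accounting for the line at infinity.
\item Check that $B$ has only simple (ADE) singularities, possibly after a few blow-ups of the base at points where $B$ has a quadruple point or an infinitely near triple point.
\end{enumerate}
If this holds, the minimal resolution of the double cover is a K3 surface, and $\kappa=0$ follows.

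As an alternative check, resolve the quintic $\overline{S}_{4,2}\subset\P^3$ directly, as was done for the cubic in Lemma 5.1. Locate its singular points: expect triple points at the coordinate points at infinity and some double lines coming from the removed planes. Blow up, and compute $K$ by adjunction. On $\P^3$, $K_{\overline S}=\mathcal O(1)$; each blow-up of a point of multiplicity $m$ subtracts $(m-2)E$ from the adjoint class. The goal is to show that the resolved canonical class is effective with $K\equiv 0$ on a minimal model, i.e.\ that $p_g=1$ and $K^2=0$ after contracting $(-1)$-curves.

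The main obstacle is the singularity analysis: computing $\Delta$ or the singular locus of the quintic, and verifying that all infinitely near singularities are rational double points, so that adjunction conditions are neither lost nor overcounted. I would first try the double-cover model, since ADE checks on a plane curve are more mechanical than resolving a quintic surface with non-isolated singularities. I would then confirm $p_g=1$, and rule out $p_g=0$, which would indicate an Enriques or rational surface, by exhibiting the unique adjoint form.
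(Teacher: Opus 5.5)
Your main route is exactly the paper's proof: the generically $2$-to-$1$ forgetful map to $S_4\cong\A^2$ (coordinates $(a_1,x)$), branched along the discriminant of $\Phi_{2,f}^*$, whose closure is a sextic in $\P^2$ with only simple singularities, so the resolved double cover is a K3 surface. The paper carries out the singularity check you defer: the sextic has only double points (four in $\P^2$, plus infinitely near double points over two of them), so the canonical resolution gives $K=\pi^*(K_{\Bl_{4,2}\P^2}+\overline{\mathcal{B}}'/2)=0$, and it obtains $q=0$ from $\pi_*\mathcal{O}\cong\mathcal{O}\oplus\mathcal{L}^{-1}$ rather than quoting the K3 criterion for sextic double planes.
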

    \begin{proof}
        Recall that a degree two rational map on $\P^1$has at most one $2$-cycle. 
        Therefore, the projection map, $p: \Mod{2}{1}{P}{4,2} \to \Mod{2}{1}{P}{4}$, is a double cover. 
        It was shown in \cite{Blanc_2015} that $S_4 \cong \A^2$, which only depends on $(a_1,x)$. 
        We extend the projection map to $p: S_{4,2} \to S_4$ and investigate the branch locus. 

        The map $p$ has a branch locus given by the zeros of the discriminant of $\Phi_{2,f}^*([U:1])$. 
        The coefficients of $f$ are given by 
            \begin{align*}
                a_0 &= -a_1x - x^2,  \\ 
                a_2 &= 1, \\ 
                a_3 &= -a_1x - x^2, \\
                a_4 &= -a_1x^2 - x^3 + 2a_1x + x + x^2, \\ 
                a_5 &= 0. 
            \end{align*}
        The $2$-nd dynatomic polynomial is given by $AU^2 + BU + C$, where 
        \begin{align*}
            A &= -x(x-1)(a_1 + x) (a_1 + x + 1), \\
            B &= a_1x^2 + a_1^2x - x - a_1^2 - 2a_1, \\ 
            C &= a_1 + x. 
        \end{align*}
        The branch locus, $\mathcal{B}$, is a sextic curve in $\A^2$. 
        Taking the projective closure in $\P^2$, with coordinates $[a_1:x:w]$, we see that that $\overline{\mathcal{B}}$ is singular at the following double points
            \begin{align*}
                p_1 &= [1:-1:0], & p_2 &= [1:0:0], & p_3 &= [1:-1:-1], & p_4 &= [0:0:1].  
            \end{align*}

        Let $\pi_1: \Bl_4\P^2 \to \P^2$ be the blow up of these four points. 
        The strict transform of $\overline{\mathcal{B}}$ is still not smooth; it is singular at the following infinitely near double points
            \begin{align*}
                q_1 &= p_1 \times [0:1], & q_2 &= p_2 \times [1:1]. 
            \end{align*}
        Letting $\pi_2: \Bl_{4,2} \P^2 \to \Bl_4 \P^2$, be the blow up of the infinitely near points, the resulting strict transform $\overline{\mathcal{B}}'$ of $\overline{\mathcal{B}}$ is now smooth. 

        We denote $E_3,E_4$ to be the exceptional divisors obtained by blowing up $p_3,p_4$, $\widetilde{E}_i$ be the strict transform of the exceptional divisors after blowing up the infinitely near points, and $F_i$ from blowing up $q_i$ and $\widetilde{H}$ be the pullback of a general line in $\P^2$. 
        Letting $\pi = \pi_2 \circ \pi_1$ and using the ramification formula, we have 
            \begin{align*}
                K_{\Bl_{4,2} \P^2} &= -3\widetilde{H} + \widetilde{E}_1 + \widetilde{E}_2 + E_3 + E_4 + 2F_1 + 2F_2, \\ 
                \overline{\mathcal{B}}' &= 6\widetilde{H} - 2\widetilde{E}_1 - 2\widetilde{E}_2 -2E_3 - 2E_4 - 4F_1 - 4F_2. 
            \end{align*}
        By applying the Riemann-Hurwitz formula, and writing $\mathcal{R}'$ for the strict transform of the resolved ramification locus, we have that
            \begin{align*}
                K_{\overline{S}_{4,2}'} &= \pi^* \left( K_{\Bl_{4,2}\P^2} \right) + \mathcal{R}' \\
                                        &= \pi^*\left( K_{\Bl_{4,2}\P^2} + \frac{\overline{\mathcal{B}}'}{2} \right) \\ 
                                        &= 0, 
            \end{align*}
        where $\overline{S}_{4,2}'$ denotes the resolved projective surface from blowing up $\overline{\mathcal{B}}$. 
        The canonical divisor is trivial; we now look to the irregularity of $\overline{S}_{4,2}$. 
        The space $\overline{S}_{4,2}'$ is a double cover of $\Bl_{4,2}\P^2$ branched along $\overline{\mathcal{B}}'$. 
        Let $\mathcal{L}^{\otimes 2} = \mathcal{O}_{\Bl_{4,2}\P^2}{\overline{\mathcal{B}}'}$. 
        Thus, $\pi_*\mathcal{O}_{\overline{S}_{4,2}'} \cong \mathcal{O}_{\Bl_{4,2}\P^2} \oplus \mathcal{L}^{-1}$ \cite{barth2003compact}. 
        Therefore, 
            \[
                H^1(\overline{S}_{4,2}', \mathcal{O}_{\overline{S}_{4,2}'}) \cong H^1(\Bl_{4,2} \P^2, \mathcal{O}_{\Bl_{4,2} \P^2}) \oplus H^1(\Bl_{4,2} \P^2, \mathcal{L}^{-1}), 
            \]
        So that $h^1(\overline{S}_{4,2}', \mathcal{O}_{\overline{S}_{4,2}'}) = 0$. 
        The irregularity is $0$, $\overline{S}_{4,2}'$ is a $K3$ surface and $\Mod{2}{1}{P}{4,2}$ has Kodaira dimension $0$.  
    \end{proof}

    
    \begin{lemma}
        The surface $\Mod{2}{1}{P}{3,2,1}$ has Kodaira dimension 1. 
    \end{lemma}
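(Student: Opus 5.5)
We follow the strategy used for $\Mod{2}{1}{P}{4,2}$ and realize the surface as a double cover of a rational surface. Since a degree two rational map has at most one $2$-cycle, forgetting the $2$-cycle gives a projection
\[
p: \Mod{2}{1}{P}{3,2,1} \longrightarrow \Mod{2}{1}{P}{3,1},
\]
in the spirit of the projection lemmas of Section 4. Its fibres consist of the two orderings of the unique $2$-cycle, so $p$ is generically $2$-to-$1$. By the four-point lemma, $S_{3,1}\cong\A^2$ with coordinates $(a_1,x)$ (taking $a_3=1$), and the coefficients of $f$ there are explicit. We extend $p$ to $S_{3,2,1}\to S_{3,1}=\A^2$. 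Its branch locus $\mathcal{B}$ is the zero set of the discriminant of the quadratic $\Phi_{2,f}^*([U:1])=AU^2+BU+C$, where $A,B,C$ are polynomials in $(a_1,x)$ computed from the explicit form of $f$. We also have to check that no factor of $\mathrm{disc}$ is a square. Otherwise it does not contribute to branching and must be removed, much as the planar conditions were removed from $\mathcal{S}$.

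Next we take the projective closure $\overline{\mathcal{B}}\subset\P^2$ in coordinates $[a_1:x:w]$ and find its singular points, including the points at infinity. We then resolve them by iterated blow-ups, tracking infinitely near points with the chart conventions of Section 2, until the strict transform $\overline{\mathcal{B}}'$ is smooth. This is done with even multiplicities chosen so that $\overline{\mathcal{B}}'$ remains $2$-divisible, i.e.\ the canonical resolution of a double cover. At each blown-up point of multiplicity $m$ we subtract $2\lfloor m/2\rfloor$ times the exceptional class. As in the $\mathcal{P}_{4,2}$ case we get
\[
K_{\overline{S}'}=\pi^*\Big(K_{\Bl\P^2}+\tfrac12\overline{\mathcal{B}}'\Big).
\]
The expected outcome is that $\overline{\mathcal{B}}$ has degree $8$, or more generally that $L:=K_{\Bl\P^2}+\tfrac12\overline{\mathcal{B}}'$ is effective, nonzero and nef with $L^2=0$. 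In that case $K_{\overline{S}'}^2=2L^2=0$ while $K$ is nontrivial and effective.

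To conclude that $\kappa=1$, we show that $L$ moves in a pencil. Concretely, we identify $L$ with a conic or line class through some of the blown-up points, of the form $a\widetilde{H}-\sum c_iE_i$ with $a^2=\sum c_i^2$. This gives $h^0(mL)$ growing linearly, and hence a base-point-free pencil whose pullback is an elliptic fibration on $\overline{S}'$. We also need $h^0(nK_{\overline{S}'})$ to actually grow. Using $\pi_*\mathcal{O}\cong\mathcal{O}\oplus\mathcal{L}^{-1}$ we compute $p_g=h^0(K_{\Bl\P^2}+\mathcal{L})$ and $q=0$. Then $\chi(\mathcal{O})$ and $K^2=0$ together with the fibration place $\overline{S}'$ as a properly elliptic surface, assuming the residual part of the pullback has positive multiplicity so that the surface is not $K3$ or Enriques. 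An alternative route to the same fibration is to exhibit the map $\Mod{2}{1}{P}{3,2,1}\to\P^1$ directly, for instance by fixing the multiplier of the fixed point; its fibres should be genus one curves.

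We expect the main obstacle to be the singularity analysis of $\overline{\mathcal{B}}$. Most of the risk is in the points at infinity and the infinitely near singularities, since a single mistake in a multiplicity changes $L$ and can flip the answer between $\kappa=0$, $1$ and $2$. We will double-check the result by computing $L\cdot L$ and $L\cdot K_{\Bl\P^2}$ and verifying that they are consistent with Noether's formula for the double cover.
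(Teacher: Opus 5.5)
Your route differs from the paper's. The paper works directly with the quintic $\overline{S}_{3,2,1}\subset\P^3$. It resolves nine singular points and four infinitely near ones, and gets $K=\pi_2^*(\widetilde H-E_8-E_9)$ by adjunction. It splits this as the line through the two triple points (a $(-1)$-curve) plus a residual quartic of square zero, then contracts the line. You instead realize the surface as a double cover of the rational surface $\Mod{2}{1}{P}{3,1}$ branched along $\mathrm{disc}\,\Phi^*_{2,f}$, as the paper does for $\mathcal{P}_{4,2}$ and $\mathcal{P}_{4,1,1}$.

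That strategy is sound, but the proof has a genuine gap. Every fact that actually decides $\kappa$ is only posited as the ``expected outcome'': the degree of $\overline{\mathcal B}$, its singularities, the resulting class $L$, and that $L$ moves in a pencil. By your own remark, one multiplicity moves the answer among $0$, $1$, $2$, so you have proved only a conditional statement. The omitted computation is the whole content of the lemma. Your ``conic or line class through some of the blown-up points'' also does not locate the singularity that matters, which is at infinity.

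The computation does go through. With $a_3=1$, set $X=x(x^2+a_1)$ and $Y=(1-x)(x^2+x-1+a_1)$. Solving $f(p)=q$, $f(q)=p$ for $p+q$ and $pq$ shows that the branch curve is
\[
X^2+2(a_1+2x^2)XY+a_1^2Y^2=0 .
\]
This is a squarefree octic of degree only $4$ in $a_1$, with leading coefficient $(1-x)^2$. Hence $\overline{\mathcal B}$ has a quadruple point at $[1:0:0]$ with tangent cone $w^2(w-x)^2$. Only double points lie infinitely near it, so the canonical resolution contributes exactly $-E$ there.

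You must still check that the remaining singularities are negligible, e.g.\ the double points at $(x,a_1)=(0,0),(0,1),(1,-1)$. Granting that, $K_{\overline S'}=\pi^*(\widetilde H-E)$, which is nef with $K^2=0$ and $p_g\ge 2$. The pencil $\abs{\widetilde H-E}$ consists of the lines $x=\mathrm{const}$. Each meets the branch curve in the four roots of a quartic in $a_1$, so its preimages are genus-one curves. This is the paper's fibration: $x=1/z$ records the fixed point relative to the $3$-cycle. Your model shows it more directly, with no $(-1)$-curve to find and contract.

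Your suggested alternative, fixing the multiplier of the fixed point, does not work. On a fibre $x=\mathrm{const}$ that multiplier equals $(x^3+a_1x+x-1)/\bigl((1-x)(x^2+x+a_1)\bigr)$, a non-constant M\"obius function of $a_1$. A surface with $\kappa=1$ has a unique genus-one fibration, so the multiplier map cannot have genus-one fibres.
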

    \begin{proof}
        The surface $S_{3,2,1} \subset \A^3$ is given by the following equation
            \[
                0 = -x^2y^2z + xyz^3 + x^2y^2 + x^2yz + xy^2z - 3xyz^2 - x^2y - xy^2 + xz^2 + yz^2 + xy - z^2. 
            \] 
        Taking its projective closure, with coordinates $[x:y:z:w]$, the surface $\overline{S}_{3,2,1}$ is singular at the following nine points, 
            \begin{align*}
                p_1 &= [1+i\sqrt{3}:1+i\sqrt{3}:1+i\sqrt{3}:2], & p_2 &= [1-i\sqrt{3}:1-i\sqrt{3}:1-i\sqrt{3}:2], \\ 
                p_3 &= [0:0:0:1], & p_4 &= [0:1:0:1],  \\ 
                p_5 &= [1:0:0:1], & p_6 &= [1:1:1:1], \\
                p_7 &= [0:0:1:0], & p_8 &= [1:0:0:0],   &                \\
                p_9 &= [0:1:0:0], 
            \end{align*}
        where $p_i, 1\leq i \leq 7$ are double points and $p_8, p_9$ are triple points. 
        Let $\pi_1: \Bl_9 \P^3 \to \P^3$ be the blow up of the nine points. 
        After blowing up $p_8,p_9$, a direct calculation shows that the surface is still singular at the following infinitely near double points
            \begin{align*}
                q_{8,1} &= p_8 \times [1:1:1], & q_{8,2} = p_8 \times [0:1:0], \\
                q_{9,1} &= p_9 \times [1:1:1], & q_{9,2} = p_9 \times [0:1:0].
            \end{align*}
         
        Letting $\pi_2: \Bl_{9,4} \P^3 \to \Bl_9 \P^3$ be the blow up of the infinitely near points, the resulting strict transform, $\overline{S}_{3,2,1}'$, of $\overline{S}_{3,2,1}$ is a smooth surface. 
        Now, let $E_i$ be the exceptional divisor from the first blow up, $F_{i,j}$, $j=1,2$, be the exceptional divisor from blowing up the infinitely near points. 
        Let $\widetilde{E}_i$ be the strict transform after blowing up the infinitely near points and $\widetilde{H}$ denote the pullback of a general hyperplane in $\P^3$. 
        By the ramification formula,
            \begin{align*}
                K_{\Bl_{9,4}\P^3} &= -4\widetilde{H} + 2\sum_{i=1}^7E_i + 2\widetilde{E}_8 + 2\widetilde{E}_9 + 4F_{8,1} + 4F_{8,2} + 4F_{9,1} + 4F_{9,2}, \\
                \overline{S}_{3,2,1}' &= 5\widetilde{H} - 2\sum_{i=1}^7 E_i - 3\widetilde{E}_8 - 3\widetilde{E}_9 - 5F_{8,1} - 5F_{8,2} - 5F_{9,1} - 5F_{9,2}.  
            \end{align*}
        Now, using the adjunction formula, we have that the canonical class of our surface is given by 
            \begin{align*}
                K_{\overline{S}_{3,2,1}'} &= (\widetilde{H} - \widetilde{E}_8 - \widetilde{E}_9 - F_{8,1} - F_{8,2} - F_{9,1} - F_{9,2})\restr{\overline{S}_{3,2,1}'} \\
                                &= \pi_2^*(\widetilde{H} - E_8 - E_9)\restr{\overline{S}_{3,2,1}'}. 
            \end{align*}
        The linear system $\abs{H - E_8 - E_9}$ corresponds to the planes that pass through the points $p_8$ and $p_9$ so that $\abs{K_{\overline{S}_{3,2,1}'}}$ are those planes intersected with $\overline{S}_{3,2,1}'$. 

        Let $L$ be the line that connects $p_8$ and $p_9$. 
        Then, $L \subset \overline{S}_{3,2,1}$. 
        A general plane intersects $\overline{S}_{3,2,1}$ along a degree five curve, so that we can express this divisor as $L'$ and $Q'$, the strict transforms of $L$ and a quartic $Q$. 
        We write $K_{\overline{S}_{3,2,1}'} = L' + Q'$. 
        By Bertini's theorem, the quartic must be irreducible. 

        The adjunction formula for $L$ shows that 
            \[
                2\mathrm{genus}(L') - 2 = (K_{\overline{S}_{3,2,1}'} + L')\restr{L'} = 2L'^2 + L'Q'. 
            \]
        Since $\mathrm{genus}(L') = 0$ and $K_{S_{3,2,1}'}L' = -1$, we have that $L'$ is a $-1$ curve on $\overline{S}_{3,2,1}'$ and $L'Q' = 0$. 
        Thus, $Q'^2 = 0$. 
        Because $L'$ is a $(-1)$-curve on $\overline{S}_{3,2,1}'$, there is a birational map contracting $L'$, $\rho: \overline{S}_{3,2,1}' \to S$. 
        This contraction then shows that $\abs{K_S}$ moves in a pencil but has $K_S^2 = 0$. 

        The linear system corresponds to a map $\abs{K_S}: S \to \P^1$, whose fibers are given by the quartics in the pencil. 
        Applying the adjunction formula once again, we see that $\mathrm{genus}(Q') = 1$. 
        The map is an elliptic fibration; the surface $S$ is an elliptic surface with Kodaira dimension $1$. 
        Thus, $\Mod{2}{1}{P}{3,2,1}$ has Kodaira dimension $1$. 
    \end{proof}


    \begin{lemma}
        The elliptic fibration $\abs{K_{\overline{S}_{3,2,1}'}}: \overline{S}_{3,2,1}' \to \P^1$ maps the hyperplane $\{CZ + DW = 0\} \subset \P^3$ to the coordinates $[C:D] \in \P^1$, where the intersection is an elliptic curve, except at finitely many $[C:D]$. 
    \end{lemma}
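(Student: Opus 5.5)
From the proof of the previous lemma, the canonical system on $\overline{S}_{3,2,1}'$ is $\pi_2^*(\widetilde{H}-E_8-E_9)$ restricted to the surface. So the plan is to identify the planes through $p_8=[1:0:0:0]$ and $p_9=[0:1:0:0]$. A plane $\{AX+BY+CZ+DW=0\}$ contains $p_8$ iff $A=0$ and contains $p_9$ iff $B=0$. Hence $\abs{H-E_8-E_9}$ is the pencil $\{CZ+DW=0\}$ with $[C:D]\in\P^1$, and its base locus in $\P^3$ is exactly the line $L=\{Z=W=0\}$ joining $p_8$ and $p_9$.

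Next I would compute the map itself. A point of $\overline{S}_{3,2,1}\setminus L$ lies on exactly one plane of the pencil, namely the one with $[C:D]=[W:-Z]$. So the rational map given by $\abs{K}$ is $[x:y:z:w]\mapsto[w:-z]$, up to the harmless sign convention identifying the plane $\{CZ+DW=0\}$ with $[C:D]$. After contracting the $(-1)$-curve $L'$ via $\rho$, the residual quartics $Q'$ are disjoint, since $Q'^2=0$. The pencil is then base point free on $S$, and the morphism sends each fiber $Q'$ to the parameter of the plane cutting it out. This gives the stated description of the fibration in the coordinates $[C:D]$.

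Then I would show that the fibers are elliptic away from finitely many parameters. Substituting $w=-(C/D)z$, or $z=-(D/C)w$ in the other chart, into the homogenized quintic gives a plane quintic in the plane $\{CZ+DW=0\}$. This quintic contains $L$ as a component, and dividing by the equation of $L$ leaves the quartic $Q_{[C:D]}$. By the adjunction computation in the previous lemma, the strict transform $Q'$ has genus $1$ for general $[C:D]$. The singular fibers occur where the discriminant of the family vanishes. Since this discriminant is a nonzero polynomial in $[C:D]$ (the general fiber is smooth), it has only finitely many zeros. One could also see this from generic smoothness of the morphism $S\to\P^1$.

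The main obstacle is checking that the general quartic $Q_{[C:D]}$ passes correctly through the blown-up points. It should have exactly the double points at $p_8,p_9$ and the infinitely near points that the blow-ups separate, so that the arithmetic genus $3$ of a plane quartic drops to geometric genus $1$ rather than lower. The cleanest route avoids an explicit check of each fiber: the intersection numbers already computed ($Q'^2=0$, $K\cdot Q'=0$, $p_a(Q')=1$), together with Bertini irreducibility, show that the general fiber is a smooth genus one curve. Only the identification of the base of the pencil with $[C:D]$ needs to be made explicit.
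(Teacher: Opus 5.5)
Your identification of the pencil and of the map matches the paper's. The planes through $p_8,p_9$ form the pencil $\{CZ+DW=0\}$, each plane section is $L$ plus a residual quartic, and in the chart $w=1$ the map is $z=-D/C$. You differ on the finiteness claim. The paper writes the fiber explicitly in $(x,y)\times[C:D]$ and locates the degenerate members by direct calculation in the chart $w=1$. You instead use generic smoothness of the morphism to $\P^1$, together with $K\cdot Q'=Q'^2=0$, hence $p_a(Q')=1$. For finiteness of the singular fibers your argument is cleaner and more complete. It also covers irreducible singular fibers and the plane $w=0$, whose residual quartic $xy(z^2-xy)$ is two lines and a conic; a factorization check in one chart sees neither.

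The step that does not work as stated is irreducibility, equivalently connectedness, of the general fiber. Bertini's irreducibility theorem requires the map given by the linear system to have image of dimension at least $2$. For a pencil the image is $\P^1$, and a pencil can be composed with a pencil: every member of $\{\lambda f^2+\mu g^2\}$ on $\P^2$, with $f,g$ linear forms, is a pair of lines. Your numerical data do not fix this. A disjoint union of $k\ge 2$ smooth elliptic curves also satisfies $K\cdot Q'=Q'^2=0$ and $p_a(Q')=1$, so you have not excluded a nontrivial Stein factorization of $\overline{S}_{3,2,1}'\to\P^1$.

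An explicit computation, as in the paper, supplies the missing step. In the chart $w=1$ the fiber over $z$ is the curve of bidegree $(2,2)$
\[
(1-z)x^2y^2+(z-1)(x^2y+xy^2)+(z^3-3z^2+1)xy+z^2(x+y-1)=0 .
\]
A factor $x-c$ or $y-c$ occurs only for $z\in\{0,1\}$. Comparing coefficients in a product of two $(1,1)$-forms forces $z\in\{0,1\}$ or $z^2-z+1=0$. Indeed the fiber is:
\begin{itemize}
\item $xy(x-1)(y-1)$ at $z=0$;
\item $-(x-1)(y-1)$ at $z=1$;
\item $-z^2(xy-x+1)(xy-y+1)$ when $z^2-z+1=0$.
\end{itemize}
With $z=-D/C$, the last case is $[C:D]=[2:-1\mp i\sqrt3]$, the $z$-values of the double points $p_1,p_2$. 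The values $[2:1\pm i\sqrt3]$ printed in the paper appear to have a sign slip. So the general residual quartic is irreducible, and with that your generic-smoothness argument completes the proof.
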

    \begin{proof}       
        Let $H$ be a plane in $\P^3$ given by $Ax + By + Cz + Dw = 0$, parameterized by $[A:B:C:D] \in \P^3$. 
        The planes that pass through $[1:0:0:0]$ and $[0:1:0:0]$ are given by the pencil $Cz + Dw = 0$ for $[C:D] \in \P^1$. 
        The linear system is then given by those planes intersected with $\overline{S}_{3,2,1}$. 
        Further, the intersection is given by an elliptic curve and a line connecting $[1:0:0:0]$ and $[0:1:0:0]$. 

        Recall that the $z$-coordinate of a point on the surface corresponds to the fixed point of the portrait. 
        By intersecting with these planes, the $z$-coordinate is given by $-\frac{D}{C}$, in the affine chart $w = 1$. 
        Thus, the elliptic fibration maps a tuple in $\overline{S}_{3,2,1}$ to the fixed point of the portrait, where the fibers are ellipic curves. 
        In the affine chart, $w=1$, the elliptic curve is parameterized by $(x,y) \times [C:D] \in \A^2 \times \P^1$ and is given by 
            \begin{align*}
                0 = &-x^2y^2C^3 - x^2y^2C^2D + x^2yC^3 + xy^2C^3 + x^2yC^2D + xy^2C^2D \\
                    & - xyC^3 + 3xyCD^2 + xyD^3 - xCD^2 - yCD^2 + CD^2
            \end{align*}
        A calculation shows that when $[C:D] = [1:0], [1:-1]$, or $ [2: 1 \pm i \sqrt{3}]$, the general fiber degenerates to a union of four lines, or two lines and a conic and that these are the only points where it happens in this chart. 
    \end{proof}


    \begin{lemma}
        The surface $\mathcal{M}_2^1[\mathcal{P}_{3,3}]$ has Kodaira dimension $1$. 
    \end{lemma}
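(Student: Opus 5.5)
The plan is to mimic the argument for $\Mod{2}{1}{P}{3,2,1}$, using the $\A^3$ model from Lemma 4.2. With the normalization that $[0:1],[1:0],[1:1]$ form the first $3$-cycle and $[x:1],[y:1],[z:1]$ the second, I first write down the matrix $M$ explicitly. I then compute $\det M$, and discard the planar factors coming from collisions of points, to get the sextic surface $S_{3,3}\subset\A^3$. I take its projective closure $\overline{S}_{3,3}\subset\P^3$ in coordinates $[x:y:z:w]$. The portrait has a cyclic symmetry $(x,y,z)\mapsto(y,z,x)$ (rotating the second cycle), and also the symmetry swapping the two cycles, which acts by a birational change of coordinates. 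I will use these symmetries to predict and organize the singular points, which I expect to lie on the diagonal, at the coordinate points $[1:0:0:0],[0:1:0:0],[0:0:1:0]$, and at the affine points where two of $x,y,z$ lie in $\{0,1\}$.

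The next step is to find all singular points of $\overline{S}_{3,3}$ by solving the gradient equations. At each point I record the multiplicity. I then blow up, and inspect the charts for infinitely near singularities, exactly as was done for $p_8,p_9$ in the $\mathcal{P}_{3,2,1}$ case, iterating until the strict transform $\overline{S}_{3,3}'$ is smooth. I expect triple points at the coordinate points at infinity, with further infinitely near double points over them. I also have to check that every singularity encountered is an ordinary point whose blow-up behaves like an ADE or triple point, so that the discrepancy bookkeeping is valid. Then I apply the ramification formula for $K$ of the iterated blow-up of $\P^3$, together with the class of the strict transform $6\widetilde{H}-\sum m_iE_i-\cdots$, and use adjunction to get $K_{\overline{S}_{3,3}'}=\pi^*(2\widetilde{H}-\sum(m_i-2)E_i-\cdots)\restr{\overline{S}_{3,3}'}$. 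Double points contribute nothing, so only the triple and higher points matter. The result is a linear system of quadrics (or of planes, after cancellation) through the bad points.

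The main obstacle is showing that $\kappa=1$ and not $2$. This requires identifying the moving part of $\abs{K}$ as a pencil $\Lambda$ with $\Lambda^2=0$ after the fixed components have been removed. I will first look for lines or conics contained in $\overline{S}_{3,3}$ through the triple points, such as the lines joining the points at infinity, since these become the fixed part. Then I compute self-intersections and adjunction, as with $L'$ before: show the fixed curves are $(-1)$- or $(-2)$-curves orthogonal to the moving part, and contract the $(-1)$-curves. After this, the moving part should be a pencil of genus-one curves, so $K_S^2=0$ and $h^0(nK)$ grows linearly.

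As a cross-check, and as a fallback if the singularity analysis gets unwieldy, I will look for an elliptic fibration directly, analogous to Lemma 5.4. A natural candidate is to fix the multiplier of one $3$-cycle, or a symmetric function of $x,y,z$, and show the fibers are genus-one curves. The alternative is the projection to $\Mod{2}{1}{P}{3}$-type data. Here the fiber of the map forgetting the second $3$-cycle is finite, because a degree two map has at most two $3$-cycles. Combining the genus-one fibration with $K\neq0$ (some $nK$ effective and nontrivial) gives $\kappa=1$.
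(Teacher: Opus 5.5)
\textbf{There is a genuine gap: the adjunction step fails because the sextic is singular along a whole line.}

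You treat the singular locus of the $\A^3$ model as finitely many points, but it is not finite. With the first cycle normalized to $[0:1],[1:0],[1:1]$, the three rows of $M$ coming from that cycle are constant. The other three rows are $(x^2,x,1,-x^2y,-xy,-y)$, $(y^2,y,1,-y^2z,-yz,-z)$ and $(z^2,z,1,-z^2x,-zx,-x)$. On the line $\ell=\{x=y=z\}$, where the second $3$-cycle collapses to a fixed point, these three rows coincide. So $M$ has corank at least $2$ there, and $\det M$ vanishes to order at least $2$ along all of $\ell$.

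Hence $\overline{S}_{3,3}\subset\P^3$ is singular along the entire line $\ell$, so it is not normal. The line $\ell$ passes through the origin, which is a triple point: the lowest-order part of the equation there is $3xyz-x^2z-xy^2-yz^2$. It also passes through $[1:1:1:0]$, which is the node of the cubic factor in the top-degree part $xyz(x^2y+y^2z+z^2x-3xyz)$.

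You assert that double points contribute nothing, so only triple and higher points matter. That is false for a double curve. Blowing up $\ell$ contributes $+E_\ell$ to the canonical class of the ambient blow-up and $-2E_\ell$ to the class of the strict transform. This gives $K_{\overline{S}_{3,3}'}=(2\widetilde{H}-E_\ell-\cdots)\restr{\overline{S}_{3,3}'}$. So the canonical system is cut out by quadrics \emph{containing} $\ell$, not merely quadrics through a few points.

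Your formula $K=\pi^*(2\widetilde H-\sum(m_i-2)E_i-\cdots)$ therefore computes a strictly larger system. It is precisely the conductor of the singular curve that cuts $\abs{K}$ down to a pencil.

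This is also the mechanism in the paper's proof, which uses the Segre cubic model of $P_1^6$ instead. There $\overline{S}_{3,3}=D_1\cap D_2\cap D_3$ has $13$ double points and is singular along a nodal plane cubic $\mathcal{C}$. Adjunction gives $K=(H-E_{\mathcal{C}'})\restr{\overline{S}_{3,3}'}=2\widetilde{\mathcal{C}'}+L_1'+L_2'+L_3'$, with the $L_i'$ being $(-1)$-curves and $(\widetilde{\mathcal{C}'})^2=0$. The transform of the singular curve is then a fibre of the elliptic fibration.

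\textbf{Your fallback does not establish $\kappa=1$ as stated.}

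A genus-one fibration gives only $\kappa\le 1$. Knowing that some $nK$ is effective and nontrivial does not rule out $\kappa=0$. For example, an elliptic K3 surface blown up at a point has $K=E$, effective and nonzero, yet $h^0(nK)=1$ for all $n$.

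What you need is $h^0(nK)\ge 2$ for some $n$. You could get this by showing that after contracting $(-1)$-curves, $K$ becomes a positive multiple of the fibre class; in the paper it becomes $2\widetilde{\mathcal{C}'}$. Alternatively, use the canonical bundle formula, which requires computing $\chi(\mathcal{O})$ and identifying the multiple fibres.

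Your candidate fibrations (the multiplier of a cycle, or a symmetric function of $x,y,z$) come with no reason for the fibres to have genus one. The map forgetting the second cycle is generically finite of degree $3$ onto $\Mod{2}{1}{P}{3}$, so it gives a cover, not a fibration.
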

    \begin{proof}
        We use the model of $\Mod{2}{1}{P}{3,3}$ that is embedded into the space of six ordered points, $P_1^6$, taking inspiration from \cite{Blanc_2015}, where the authors consider the space $\Mod{2}{1}{P}{6} \subset P_1^6 \subset \P^5$ to understand its quotient spaces. 
        Recall that $\Mod{2}{1}{P}{3,3} \subset (\P^5 \times (\P^1)^6)/\Aut{\P^1}$. 
        Thus, there is a well defined map to the space of six ordered points, $P_1^6 = (\P^1)^6/\Aut{\P^1}$. 
        
        The map from $(\P^1)^6 \dashrightarrow P_1^6 \subset \P^5$ is constructed by considering the $20$ triangles on $6$ labeled vertices. 
        There is a partition into two sets of $10$ such that two disjoint triangles are opposite colors, and every tetrahedron has $2$ triangles of each color. 
        There is a group action by the symmetric group on six elements on this set, which is the outer automorphism of the symmetric group on six elements.  
        For more details, see \cite{vakil2005space}.

        The variety $P_1^6$ can be viewed as the rational cubic threefold in $\P^5$ given by 
            \begin{align*}
                0 =& X_0 + X_1 + X_2 + X_3 + X_4 + X_5 \\
                0 =& X_0^3 + X_1^3 + X_2^3 +  X_3^3 + X_4^3 + X_5^3 \\
            \end{align*}
        Let $D_1$ be the hypersurface given by the linear condition and $D_2$ be the hypersurface given by the cubic condition. 
        Using SageMath, we see that $\overline{S}_{3,3}$, the projective closure, can be realized as a surface in in $\P^5$ given by the intersection of $D_1,D_2$, and $D_3$, where $D_3$ is given by 
            \begin{align*}             
                 0 =    & X_0^3 + 2X_1^3 + 2X_2^3 - X_1X_3^2 - X_2X_3^2 -X_3^3 + X_1^2X_4 + 2X_1X_2X_4 + X_2^2X_4 \\
                        & + 2X_1X_3X_4 + 2X_2X_3X_4 + 2X_1X_4^2  + 2X_2X_4^2 + 3X_3X_4^2  + X_4^3 - X_3^2X_5 \\
                        & + 2X_1X_4X_5 + 2X_2X_4X_5 + 2X_3X_4X_5 + 2X_4^2X_5 + X_4X_5^2 + 2X_5^3\\
            \end{align*}
        Moving forward, let $\overline{S}_{3,3}$ denote the model in the space of six ordered points. 

        The surface $\overline{S}_{3,3}$ is singular at the following thirteen points of multiplicity two
            \begin{align*}
                p_1 & = [1:1:1:-1:-1:-1], & p_2 &= [1:1:-1:1:-1:-1], & p_3 &= [1:-1:1:1:-1:-1], \\
                p_4 & = [1:-1:-1:-1:1:1], & p_5 &= [1:1:-1:-1:1:-1], & p_6 &= [1:-1:1:-1:1:-1], \\
                p_7 & = [1:-1:-1:1:-1:1], & p_8 &= [1:-1:-1:1:1:-1],  & p_9 &= [1:-1:1:-1:-1:1], \\
                p_{10} & = [1:1:-1:-1:-1:1], & p_{11} & = [0:1:-1:0:0:0:0], & p_{12} &= [0:-1:0:0:0:1] \\
                p_{13} &= [0:0:-1:0:0:1],
            \end{align*}
        and the curve $\mathcal{C} = D_1 \cap D_2 \cap D_3 \cap \{X_0 = X_3\} \cap \{X_3 = X_4 \}$. 
        Further, we have that $p_8, p_{11}, p_{12}, p_{13} \in \mathcal{C}$, where $p_8$ is a singular point of multiplicity two on $\mathcal{C}$.  

        We now blow up the singularities of $\overline{S}_{3,3}$ noting that although each hypersurface contains the points, $D_1,D_2$ vanish with multiplicity one at each singularity while $D_3$ vanishes with multiplicity two at each singularity. 
        We first resolve the curve $\mathcal{C}$ by blowing up $p_8$, and then resolving the strict transform, $\mathcal{C}'$. 
        Finally, we blow up the remaining nine singular points that do not lie on $\mathcal{C}'$.
        The points $p_{11}, p_{12}, p_{13}$ are resolved after blowing up $\mathcal{C'}$.  
        Denoting by $\pi: \widehat{\P^5} \to \P^5$ the blow up process described, the strict transform, $\overline{S}_{3,3}'$ is a smooth surface. 

        We denote by $E_q, E_{\mathcal{C}'}, E_1, ..., E_{10}$ the twelve exceptional divisors obtained according to the blow up order described above, and $\widetilde{H}$ the pull-back of a general hyperplane of $\P^5$. 
        The ramification formula gives that the canonical divisor is $K_{\widehat{\P^5}} = -6\widetilde{H} + 4E_q + 3E_{\mathcal{C}'} + 4\sum_{i=1}^{10} E_i$. 
        We see that the strict transform of each hypersurface is then equivalent to 
            \begin{align*}
                D_1' & = \widetilde{H} - E_8 - E_{\mathcal{C}'} - \sum_{i=1}^{10} E_i, \\ 
                D_2' &= 3\widetilde{H} - E_8 - E_{\mathcal{C}'} - \sum_{i=1}^{10} E_i, \\ 
                D_3' &= 3\widetilde{H} - 2E_8 - 2E_{\mathcal{C}'} - 2\sum_{i=1}^{10} E_i, 
            \end{align*}
        in $\Pic({\widehat{\P^5}})$. 
        
        Applying the adjunction formula, we find that $K_{S_{3,3}'} = \left. \left( H - E_{\mathcal{C}'} \right) \right\restr{S_{3,3}'}$. 
        The curve $\mathcal{C}$ is contained in a $\P^2 \subset \P^5$ that is defined by $D_1, \{X_0 = X_3\}, \{X_3 = X_4 \}$. 
        Using this model of $\P^2$, we see that that $\deg(\mathcal{C})= 3$, arithmetic genus is $1$, and geometric genus is $0$. 
        This is the class of a hyperplane that contains $\mathcal{C}$, however, any hyperplane that contains $\mathcal{C}$, must also contain $p_8$. 
        Since this divisor is effective, we have that $K_{\overline{S}_{3,3}'} = \pi^*(H - E_8  - E_{\mathcal{C}'})\restr{\overline{S}_{3,3}'} + E_8\restr{\overline{S}_{3,3}'}$ is effective. 
        
        Now, any hyperplane that contains $\mathcal{C}$ and $p_8$ must contain the plane defined by $D_1, \{X_0 = X_3\}, \{X_3 = X_4 \}$. 
        The only curves that this divisor can be negative on must lie on this $\P^2$. 
        We check which curves on this $\P^2$ are on $\overline{S}_{3,3}'$. 
        The intersection of $S_{3,3}$ and $\{X_0 = X_3 \}$ is the union of three lines (with multiplicity one) and the cubic curve (with multiplicity two). 
        The intersection of $S_{3,3}$ and $\{X_3 = X_4 \}$ is the union of three other lines (with multiplicity one) and the same cubic curve (with multiplicity two). 
        Let $L_1,L_2,L_3 \subset \P^5$ be these three lines. 
        If we use the hyperplane $\{X_0 = X_3 \}$, we have that 
            \begin{align*}
                L_1 \cap L_2            &= \{ p_3\},        & L_1 \cap L_3          &= \{ p_2 \},       & L_2 \cap L_3          &= \{ p_7 \}, \\
                \mathcal{C} \cap L_1    & = \{ p_{11}\},    & \mathcal{C} \cap L_2  &= \{ p_{13} \},    & \mathcal{C} \cap L_3  &= \{p_{12} \}.  
            \end{align*}
        However, none of these $L_i$ contain $p_8$, so that $L_i E_8 = 0$. 

        We can express our divisor as $K_{S_{3,3}'} = (2\widetilde{\mathcal{C}'} + L_1' + L_2' + L_3')$, where $E_8 \restr{S_{3,3}} = -2E_8^4$ and $\widetilde{\mathcal{C}'} = E_{\mathcal{C}'}\restr{\overline{S}_{3,3}'}$. 
        On $\widehat{\P^5}$, $\widetilde{\mathcal{C}'}$, and the strict transforms of the lines, $L_i'$ no longer intersect. 
        By adjunction, we have 
            \begin{align*}
                -2                      &=  K_{\overline{S}_{3,3}'} L_i' + L_i'^2                  = 2L_i'^2 \\
                0                       &= K_{\overline{S}_{3,3}'}\widetilde{\mathcal{C}'} + (\widetilde{\mathcal{C}'})^2  = 3(\widetilde{\mathcal{C}'})^2 
            \end{align*}
        Thus, $L_i'$ are $(-1)$ curves and $(\widetilde{\mathcal{C}'})^2 = 0$. 
        The canonical divisor $K_{\overline{S}_{3,3}'}$ is then nef and effective. 
        Since $(\widetilde{\mathcal{C}'})^2 = 0$, $\abs{K_{\overline{S}_{3,3}'}}: \overline{S}_{3,3}' \to \P^1$ is an elliptic fibration with $\abs{\widetilde{\mathcal{C}'}}$ as the fibers. 
        Therefore, $S_{3,3}'$ is an elliptic surface with Kodaira dimension $1$ so that $\Mod{2}{1}{P}{3,3}$ has Kodaira dimension $1$.         
    \end{proof}


    \begin{lemma}
        The surface $\Mod{2}{1}{P}{4,1,1}$ has Kodaira dimension $1$.   
    \end{lemma}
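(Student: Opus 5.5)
The plan is to realize $\Mod{2}{1}{P}{4,1,1}$ as a cover of a rational surface and compute its canonical class through the branch curve, mirroring the proof for $\Mod{2}{1}{P}{4,2}$. Forgetting the two fixed points gives a projection $S_{4,1,1} \to S_4 \cong \A^2$, with coordinates $(a_1,x)$ as in \cite{Blanc_2015}. A degree two map has three fixed points counted with multiplicity. Choosing an ordered pair of distinct fixed points is therefore equivalent to ordering all three, so the map is generically $6$-to-$1$. I will factor it as
\[
S_{4,1,1} \longrightarrow S_{4,1} \longrightarrow S_4 .
\]
Here the first map is a double cover, since once one fixed point is chosen the remaining two are the roots of a quadratic. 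The second map is the triple cover obtained by choosing one root of the cubic $\Phi_{1,f}^*([U:1])$, whose coefficients are polynomial in $(a_1,x)$ from the formulas for $S_4$.

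The first step is to determine $S_{4,1}$. The curve $\{\Phi_{1,f}^*(z)=0\}$ in $(a_1,x,z)$ is linear in $a_1$, because the coefficients of $f$ are linear in $a_1$. So we can solve for $a_1$ as a rational function of $(x,z)$, which shows $S_{4,1}$ is rational with coordinates $(x,z)$. This is consistent with Lemma~3.3, since $S_{4,1}$ is already known to be $\A^2$. Next, given one fixed point $z$, divide $\Phi_{1,f}^*$ by $(U-z)$ to get a quadratic $\alpha U^2+\beta U+\gamma$ in $U$, with coefficients in $\Q[x,z]$. The surface $S_{4,1,1}$ is then the double cover of $\A^2_{(x,z)}$ branched along $\beta^2-4\alpha\gamma=0$. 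Equivalently, it is the closure of the quintic model in $\A^3$ from Lemma~4.2; I will use that model to confirm the degree of the branch curve $\mathcal{B}$.

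The second step is to take the closure $\overline{\mathcal{B}}\subset\P^2$, write its degree as $2k$, and list its singular points, including infinitely near ones. I then resolve them by iterated blow-ups $\Bl\P^2$, exactly as for $\mathcal{P}_{4,2}$, keeping the branch divisor even at each step by subtracting $2\lfloor m/2\rfloor$ times each exceptional divisor over a point of multiplicity $m$. By Riemann--Hurwitz,
\[
K_{\overline{S}'}=\pi^*\Big(K_{\Bl\P^2}+\tfrac12\overline{\mathcal{B}}'\Big).
\]
I expect $\deg\overline{\mathcal{B}}=8$, so that $\tfrac12\overline{\mathcal{B}}'-3\widetilde H$ becomes $\widetilde H-\sum E_i$ for a suitable collection of exceptional divisors. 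With two ordinary quadruple points, or a quadruple point with an infinitely near one, this leaves $D=\widetilde H - E_a - E_b$. That is the pencil of lines through two base points, or more likely the pencil of lines through one point.

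The final step is to show that $|K|$ induces a genus one fibration. The pullback of $D$ is the pullback of a pencil of lines whose general member $\ell$ meets $\overline{\mathcal{B}}'$ in $4$ points. The double cover of $\ell\cong\P^1$ branched in $4$ points is an elliptic curve. So $K$ is the pullback of a fibre class of a fibration of $\Bl\P^2$ over $\P^1$; it is nef with $K^2=0$, and $h^0(K)\ge 2$. Hence $\kappa=1$, after checking that no fixed components survive and after contracting any $(-1)$-curves, for instance lines through the base point contained in $\mathcal{B}$, as in the proof for $\mathcal{P}_{3,2,1}$. The main obstacle is the explicit singularity analysis of $\overline{\mathcal{B}}$: correctly tracking infinitely near points at infinity and the parity of multiplicities. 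Failing that, I would use the quintic model in $\A^3$ and adjunction directly, as in Lemma~5.3.
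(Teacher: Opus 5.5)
Your strategy is the paper's: realize $\Mod{2}{1}{P}{4,1,1}$ as the double cover of $S_{4,1}\cong\A^2$ branched along the discriminant of the residual quadratic $AU^2+BU+C$, resolve, and apply Riemann--Hurwitz. That discriminant is the octic $\overline{\mathcal B}=\overline L\cup\overline C_7$ with $L=\{x=0\}$. The detour through $S_4$ is unnecessary. Lemma 3.3 already gives coordinates $(x,y)$ on $S_{4,1}$ with the fixed point at $[1:1]$. Your $(x,z)$ differ from these by a quadratic Cremona transformation, so degree and singularity predictions do not carry over from one model to the other.

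The gap is your last step, which asserts that $K$ is the pullback of a fibre class with $h^0(K)\ge 2$. First, $\widetilde H-E_a-E_b$ is not a pencil: the lines through two points, or through a point with a prescribed tangent, reduce to a single line. Second, this is essentially what happens here once you apply your own $2\lfloor m/2\rfloor$ rule with $m$ counting exceptional curves already added to the branch curve.
\begin{itemize}
\item At $p_1=[0:1:0]$, the curve $C_7$ has a double point with tangent cone $x^2=0$, i.e.\ tangent to $L$. So $\overline{\mathcal B}$ has multiplicity $3$ there, and $E_1$ joins the branch curve.
\item At $q_1=p_1\times[0:1]$, the strict transform $C_7'$ has a node, and $L'$ and $E_1$ both pass through it transversally. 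This is an ordinary quadruple point of the branch curve.
\end{itemize}
Write $Y$ for the blown-up plane, $\widetilde S$ for the resolved double cover, and $\mathcal E_2$ for the total transform of the exceptional curve over the quadruple point $p_2=[0:0:1]$. Then
\[
K_{\widetilde S}=p^*\bigl(K_Y+\tfrac12 B\bigr)=p^*\bigl(\widetilde H-\mathcal E_2-F_1\bigr).
\]
The class $\widetilde H-\mathcal E_2-F_1$ has the single member $L''+E_1'+F_1$, so $p_g=1$. Since $p_g$ is a birational invariant, $h^0(K)\ge 2$ fails on every model. Noether's formula confirms this. With the singularities listed in the paper, the smooth branch curve is $C_7''$ (genus $15-12=3$) together with the rational curves $L''$ and $E_1'$, so $e(\widetilde S)=2\cdot 13-0=26$. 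Then $K_{\widetilde S}^2=-2$ gives $\chi=2$. By contrast, the paper's $K=p^*(\widetilde H-E_2)$, the pullback of the pencil of lines through $p_2$, overlooks the same point: it would give $K^2=0$ and force $12\chi=26$.

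The conclusion $\kappa=1$ survives, but the fibration must come from the pencil $|\widetilde H-\mathcal E_2|$ of lines through $p_2$, not from $|K|$. A general such line meets $\overline{\mathcal B}$ in $8-4=4$ further points, so its preimage is elliptic and $\kappa\le 1$. The member of the pencil through $p_1$ is $L''+E_1'+2F_1$, and its preimage is $2(R_L+R_E+\widetilde F_1)$, a double fibre. Here:
\begin{itemize}
\item $R_L$ and $R_E$ are the reduced preimages of the branch curves $L''$ and $E_1'$; both are $(-1)$-curves, since $L''^2=E_1'^2=-2$;
\item $\widetilde F_1$ is the elliptic double cover of $F_1$ branched in $4$ points.
\end{itemize}
Since $K_{\widetilde S}=2R_L+2R_E+\widetilde F_1$, the divisor $2K_{\widetilde S}$ is a full fibre plus $2R_L+2R_E$. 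Hence $h^0(2nK_{\widetilde S})\ge n+1$ and $\kappa\ge 1$. After contracting $R_L$ and $R_E$, the canonical class is exactly the half-fibre, as the canonical bundle formula with $\chi=2$ and one double fibre predicts.
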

    \begin{proof}
        In the previous section we showed that $\Mod{2}{1}{P}{4,1}$ is a rational surface. 
        We have that $\Mod{2}{1}{P}{4,1,1}$ is a double cover of $\Mod{2}{1}{P}{4,1}$.
        The covering map, $p: \Mod{2}{1}{P}{4,1,1} \to \Mod{2}{1}{P}{4,1}$ extends to $S_{4,1,1} \to S_{4,1} \cong \A^2$. 
        It is branched along the discriminant of $\Phi_{1,f}^*([U:1]) = AU^2 + BU + C$, where  
            \begin{align*}
                A &= - (xy^2 - xy + x - y),  \\
                B &= -x(x^2y - 3xy^2 - x^2 + 2xy + 2y^2 - y), \\
                C &= -xy(xy^2 + x^2 - 3xy + y). 
            \end{align*}
        The branch locus, $\mathcal{B}$ is given by the union of the following irreducible curves, 
                \begin{align*}
                        L:      & 0 = x, \\
                        C_7:    &0 = x^5y^2 - 6x^4y^3 + 9x^3y^4 - 4x^2y^5 - 2x^5y + 10x^4y^2 - 12x^3y^3 + 4x^2y^4\\
                                &  + x^5 - 4x^4y + 2x^3y^2 - 2x^2y^3 + 4xy^4 - 2x^3y + 12x^2y^2 - 12xy^3 - 3xy^2 + 4y^3. 
                \end{align*} 
        Taking its projective closure in $\P^2$ with coordinates $[x:y:w]$, the curve $\overline{C}_7$ is singular at the following points,  
            \begin{align*}
                p_1 &= [0:1:0], & p_2 &= [0:0:1], &p_3 &= [1:1:1], \\ 
                p_4 &= [1:1:0], &p_5 &= [1:0:0], 
            \end{align*}
        with multiplicity three at $p_2$ and two for the rest. 
        We also have that $\overline{L}$ intersects $\overline{C}_7$ at $p_1, p_2$. 
        Letting $\pi_1: \Bl_5 \P^2 \to \P^2$ denote the blow up at the five singular points, we see that the strict transform of $\overline{C}_7$ is not smooth, it is singular at the following infinitely near double points,   
            \begin{align*}
                q_1 &= p_1 \times [0:1], & q_2 &= p_2 \times [1:0],        & q_3 &= p_3 \times [0:1], \\
                q_4 &= p_4 \times [1:-1], & q_5 &= p_5 \times [1:1].  
            \end{align*}
        The strict transform of $\overline{L}$ intersects the strict transform of $\overline{C}_7$ at $q_1$. 
        Letting $\pi_2: \Bl_{5,5} \P^2 \to \P^2$ denote the infinitely near blow up, we have that the resulting strict transform, $\overline{C}_7'$, of $\overline{C}_7$ is now smooth and does not intersect $\overline{L}'$, the final strict transform of $\overline{L}$. 
        
        We denote by $E_i$ the exceptional divisors from the blow up of $p_i$, $F_j$ the exceptional divisors from the blow up of $q_j$, and $\widetilde{H}$ the pull back of a general line in $\P^2$. 
        We express the canonical class and branch locus in $\Pic(\Bl_{5,5} \P^2)$
            \begin{align*}
                K_{\Bl_{5,5}\P^2} & = -3\widetilde{H} + \sum_{i=1}^5 E_i + 2\sum_{j=1}^5 F_j, \\ 
                \overline{\mathcal{B}}' &= 8\widetilde{H} -3E_1 - 4E_2 - 2 \sum_{i=3}^5 E_i - 5F_1 - 6F_2 - 4 \sum_{j=3}^5 F_j.  
            \end{align*}
            
        Not every coefficient of $\overline{\mathcal{B}}' \in \mathrm{Pic}(\Bl_{5,5} \P^2)$ is even. 
        Consider the modified branch locus, $\widetilde{\mathcal{B}} = \overline{\mathcal{B}}' + E_1 + F_1$ \footnote{\cite{barth2003compact} Chapter V, Section 22 If a two-fold covering is branched over a curve with a singularity of odd multiplicity, then we add in an exceptional divisor to have all even coefficients of the branch locus in the Picard group.}. 
        Using the Riemann-Hurwitz formula with $\widetilde{\mathcal{B}}$, we get 
            \begin{align*}
                K_{\overline{S}_{4,1,1}'} &= p^* \left (\widetilde{H} - E_2\right) 
            \end{align*}
        where $\overline{S}_{4,1,1}'$ denotes the resolved surface from blowing up the branch locus. 
        The linear system $\abs{\widetilde{H} - E_2}$ is corresponds to the lines in $\P^2$ that pass through $p_2$. 
        The map induced by $\abs{\widetilde{H} - E_2}$ is the projection from the the point $p_2$, that is, $\P^2 \to \P^1, [x:y:w] \mapsto [x:y]$. 
        Thus, $p^*(\widetilde{H} - E_2)$ is the pullback of those lines in $S_{4,1,1}$, hence, $\abs{p^*(\widetilde{H} - E_2)}$ defines a map to $\P^1$, so that $\Mod{2}{1}{P}{4,1,1}$ is an elliptic surface with Kodaira dimension $1$. 
    \end{proof}


    \begin{lemma}
        The surface $\Mod{2}{1}{P}{5,1}$ has Kodaira dimension $2$. 
    \end{lemma}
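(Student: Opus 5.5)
The natural approach is the covering model from Section 4. By the lemma on $\mathcal{P}_{n,1}$ with $n=5$, the forgetful map $\Mod{2}{1}{P}{5,1} \to \Mod{2}{1}{P}{5}$ extends to a generically $3$-to-$1$ map $p: S_{5,1} \to S_5$. Here $S_5$ is rational: by \cite{Blanc_2015} it is an open subset of $\A^2$ with coordinates $(x,y)$, and the coefficients of $f$ are explicit polynomials in $(x,y)$ as in the $\mathcal{P}_{4,1}$ computation. The branch locus is the discriminant curve of the fixed-point polynomial $\Phi_{1,f}^*([U:1]) = AU^3+BU^2+CU+D$, restricted to $S_5$. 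Because the cover has degree three and is not cyclic, Riemann--Hurwitz alone does not determine the canonical class. I would therefore work instead with the hypersurface model from the lemma realizing $\Mod{2}{1}{P}{5,1}$ as an open subset of a sextic surface in $\A^3$, obtained from $\det M = 0$ for the displayed matrix after discarding the planar components. This model has the same shape as the $\mathcal{P}_{3,2,1}$ argument, which is why I prefer it.

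\textbf{Singularities and the canonical class.} First take the projective closure $\overline{S}_{5,1} \subset \P^3$ in coordinates $[x:y:z:w]$, a sextic. Compute its singular locus, which I expect to be finitely many points: double points at the affine points where distinct points of the portrait collide (coordinates in $\{0,1\}$ and similar), and higher-multiplicity points at infinity such as $[1:0:0:0]$, $[0:1:0:0]$, $[0:0:1:0]$. Resolve these by iterated point blow-ups $\pi:\widehat{\P^3}\to\P^3$, tracking infinitely near points as in the $\mathcal{P}_{3,2,1}$ case. Suppose the singularities are points $p_i$ of multiplicity $m_i$, with exceptional divisors $E_i$, so that $K_{\widehat{\P^3}} = -4\widetilde H + \sum 2E_i$ and $\overline{S}' = 6\widetilde H - \sum m_i E_i$. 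Adjunction then gives
\[
K_{\overline{S}'} = \Bigl(2\widetilde H - \sum (m_i-2)E_i\Bigr)\Big|_{\overline{S}'}.
\]
If every singularity is a rational double point ($m_i=2$ with a resolution that is crepant for the surface), then $K_{\overline{S}'} = 2\widetilde H|_{\overline{S}'}$. This class is the pullback of an ample class under a birational morphism, so it is big and the surface is of general type: $p_g = h^0(\mathcal{O}(2))$ minus the adjoint conditions, and $K^2 = 4\cdot 6 = 24$ before correction. For the triple points I would subtract the appropriate $E_i$, giving $K = (2\widetilde H - \sum_{\text{triple}} E_i - \cdots)|_{\overline{S}'}$, which is the pullback of the system of quadric sections through the triple points.

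\textbf{Bigness.} To show $\kappa = 2$, it suffices to exhibit that the adjoint system $|2H - \sum(m_i-2)p_i|$ of quadrics, restricted to $\overline{S}$, defines a map with $2$-dimensional image. Equivalently, $K$ is an effective divisor with positive self-intersection after discarding a fixed part. A quadric condition at a few points still leaves a large linear system, since quadrics in $\P^3$ form a $10$-dimensional space. So the plan is to count conditions: if the total number of imposed conditions is less than $7$, the system has dimension at least $3$, and its generic member is irreducible by Bertini. Then $K_{\overline{S}'}$ is big, and since Kodaira dimension is a birational invariant, $\Mod{2}{1}{P}{5,1}$ has $\kappa = 2$.

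\textbf{Main obstacle.} The key difficulty is the singularity analysis at infinity. If some singular points are non-isolated, or non-ADE at points at infinity (as with the curve $\mathcal{C}$ in the $\mathcal{P}_{3,3}$ case), the adjoint conditions may be strong enough to cut the system down to a pencil. I would first verify with SageMath that the singular locus is zero-dimensional and determine the multiplicities along with the infinitely near points. If the singularities turn out badly behaved, the fallback is to compare with the triple cover $S_{5,1}\to S_5$. There one can check that $\Mod{2}{1}{P}{5,1}$ dominates via $3$-cycles, or use the bound $\kappa(S_{5,1}) \ge \kappa$ of a cover. Neither of these is immediate, so the explicit adjunction computation is the main route.
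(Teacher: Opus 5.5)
Your set-up matches the paper's proof: the sextic model $\overline{S}_{5,1}\subset\P^3$, point blow-ups including infinitely near points, and adjunction giving $K_{\overline{S}_{5,1}'}=\pi_2^*\bigl(2\widetilde H-\sum_{j=8}^{12}E_j\bigr)|_{\overline{S}_{5,1}'}$. In the paper the triple points are $[1:0:0:0]$, $[0:1:0:0]$, $[0:0:1:0]$, $[0:0:0:1]$, $[1:1:1:1]$. The seven infinitely near singular points are all double points, so your ``$\cdots$'' contributes nothing.

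The gap is in the bigness step. Counting conditions only bounds $\dim|K|$ from below. A canonical system of arbitrarily large dimension can still be composed with a pencil, as for an elliptic surface with large $p_g$. So the count cannot separate $\kappa=2$ from $\kappa=1$, which is exactly the distinction at stake among the six-point portraits. Your appeal to Bertini to get an irreducible general member is circular. Bertini's irreducibility theorem applies to the \emph{restricted} system on $\overline{S}_{5,1}$ only once you know the map has two-dimensional image, which is the claim you are trying to prove. Note also that an irreducible quadric in $\P^3$ can cut a reducible curve on the surface.

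The paper closes this step differently. It observes that $K$ is base-point free, hence nef, and shows $K^2>0$ by the projection formula:
$K^2=(2H-\sum_j E_j)^2\cdot\bigl(6H-2\sum_{i\le 7}E_i-3\sum_{j\ge 8}E_j\bigr)$ with $H^3=E_j^3=1$. This evaluates to $24-3\cdot 5=9$. The paper's displayed value $14$ comes from using coefficient $2$ instead of $3$ at the triple points, but only positivity is needed.

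If you prefer to keep your own formulation, here is the fix. The five triple points form a projective frame. Quadrics through a frame define a birational map from $\P^3$ onto the Segre cubic threefold in $\P^4$, contracting only the ten lines joining pairs of the points. Its restriction to the irreducible sextic is therefore generically injective, so the canonical map has two-dimensional image.

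Your fallback through the triple cover $S_{5,1}\to S_5$ cannot substitute for either argument. The Kodaira dimension of a generically finite cover is at least that of the base, and $S_5$ is rational, so that inequality is vacuous.
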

    \begin{proof}
        The surface $S_{5,1} \subset \A^3$ is given by the following equation
            \begin{align*}
                0 = & x^2y^2z^2 - xy^2z^3 - x^3y^2 + x^3yz - xy^3z - 2x^2yz^2 + 2xy^2z^2  \\
                    & - x^2z^3 + 2xyz^3 + x^3y^2 + xy^3 - x^3z^2 - x^2yz^2 + xy^2z^2  \\ 
                    & + 3x^2z^2 - 3xyz^2 - y^2z^2 - xy^2 + xyz^3 - xz^2 + yz^2. 
            \end{align*}
        The projective closure, $\overline{S}_{5,1}$, is singular at the following twelve points
            \begin{align*}
                p_1 &= [-1-\sqrt{5}: 3 + \sqrt{5}: z_{1,1}: 2], & p_2 &= [-1-\sqrt{5}: 3 + \sqrt{5}: z_{1,2}: 2],  \\ 
                p_3 &= [-1+\sqrt{5}: 3 - \sqrt{5}: z_{2,1}: 2], & p_4 &= [-1+\sqrt{5}: 3 - \sqrt{5}: z_{2,2}: 2], \\ 
                p_5 &= [1:0:1:1], & p_6 &= [1:1:0:1],  \\
                p_7 &= [0:1:0:1], & p_8 &= [1:1:1:1], \\
                p_9 &= [0:0:0:1], & p_{10} &= [0:0:1:0], \\ 
                p_{11} &= [0:1:0:0], &  p_{12} &= [1:0:0:0], 
            \end{align*}
        where $z_{1,i}$ is one of the solutions to $z^2 + 2(3 + \sqrt{5}) - 2z = 0$ and similarly for $z_{2,i}$. 
        The first seven $p_i$ are double points while the remaining five are triple points. 
        Let $\pi_1: \Bl_{12} \P^3 \to \P^3$ denote the blow up of the twelve points. 
        The strict transform of $\overline{S}_{5,1}$ is not smooth. 
        It is singular at the seven infinitely near double points 
            \begin{align*}
                q_8 &= p_8 \times [1:1:1], & q_9 &= p_9 \times [1:0:0], \\
                q_{10} &= p_{10} \times [0:0:1], \\
                q_{11,1} &= p_{11} \times [1:0:0], & q_{11,2} &= p_{11} \times [0:1:0],  \\ 
                q_{12,1} &= p_{12} \times [0:1:0], & q_{12,2} &= p_{12} \times [1:1:1]. 
            \end{align*}        
            
        Let $\pi_2: \Bl_{12,7} \P^3 \to \Bl_{12} \P^3$ be the blow up of the seven infinitely near points and $\overline{S}_{5,1}'$ be the strict transform, which is now smooth. 
        Let $E_i$ be the exceptional divisors from blowing up $p_1,...,p_7$, $\widetilde{E}_i$ be the pullback of the exceptional divisors from blowing up the infinitely near points, $F_i$ the exceptional divisors from blowing up $q_8, q_9, q_{10}$, $F_{i,j}$ from blowing up the remaining four $q_{i,j}$ and $\widetilde{H}$ be the pullback of a general hyperplane on $\P^3$. 
        The ramification formula gives
            \begin{align*}
                K_{\Bl_{12,7}\P^3} &= -4\widetilde{H} + 2\sum_{i=1}^7 E_i + 2\sum_{j=8}^{12}\widetilde{E}_j + 4(F_8 + F_9 + F_{10} + F_{11,1} + F_{11,2} + F_{12,1} + F_{12,2}), \\ 
                \overline{S}_{5,1}' &= 6\widetilde{H} - 2\sum_{i=1}^7 E_i - 3\sum_{j=8}^{12} \widetilde{E}_j - 5(F_8 + F_9 + F_{10} + F_{11,1} + F_{11,2} + F_{12,1} + F_{12,2}). 
            \end{align*}
        Applying the adjunction formula, we find that 
            \[
                K_{\overline{S}_{5,1}'} = \left. \left( 2\widetilde{H} - \sum_{j=8}^{12} \widetilde{E}_j - F_8 - F_9 - F_{10} - F_{11,1} - F_{11,2} - F_{12,1} - F_{12,2} \right) \right\restr{\overline{S}_{5,1}'} 
            \]
        Then, the canonical class is $\pi_2^*(2\widetilde{H} - E_8 - E_9 - E_{10} - E_{11} - E_{12})\restr{\overline{S}_{5,1}'}$, the pullback of an effective divisor. 
        Using the intersection product, $H^3 =1, E_i^3 = 1$, we have that 
            \begin{align*}
                K_{\overline{S}_{5,1}'}^2  &= \pi_2^*(2\widetilde{H} - E_8 - E_9 - E_{10} - E_{11} - E_{12})^2 \overline{S}_{5,1}' \\ 
                                &= \pi_2^*(4\widetilde{H}^2 + E_8^2 + E_9^2 + E_{10}^2 + E_{11}^2 + E_{12}^2)\overline{S}_{5,1}' \\
                                &= (4\widetilde{H}^2 + E_8^2 + E_9^2 + E_{10}^2 + E_{11}^2 + E_{12}^2)\pi_{2*}(\overline{S}_{5,1}') \\ 
                                &= (4\widetilde{H}^2 + E_8^2 + E_9^2 + E_{10}^2 + E_{11}^2 + E_{12}^2)(6\widetilde{H} - 2E_8 - 2E_9 - 2E_{10} - 2E_{11} - 2E_{12}) \\ 
                                &= 24\widetilde{H}^3 - 2E_8^3 - 2E_9^3 - 2E_{10}^3 - 2E_{11}^3 - 2E_{12}^3 \\
                                &= 14. 
            \end{align*}
        Note that $K_{\overline{S}_{5,1}}'$ is nef as it is base-point free.
        Hence, $\overline{S}_{5,1}'$ is general type and $\Mod{2}{1}{P}{5,1}$ has Kodaira dimension $2$. 
    \end{proof}


    We now have that the following is proved.
    \begin{theorem}
        Let $\mathcal{P}_{5,1}, \mathcal{P}_{4,1,1}, \mathcal{P}_{4,2}, \mathcal{P}_{3,3}, \mathcal{P}_{3,2,1}, \mathcal{P}_{3,1,1,1}$ be pre-periodic portraits on six points. 
        Then, 
            \begin{itemize}
                \item $\Mod{2}{1}{P}{3,1,1,1}$ has Kodaria dimension $-\infty$   
                \item $\Mod{2}{1}{P}{4,2}$ has Kodaira dimension $0$
                \item $\Mod{2}{1}{P}{3,2,1}, \Mod{2}{1}{P}{3,3}$, and $\Mod{2}{1}{P}{4,1,1}$ have Kodaira dimension $1$
                \item $\Mod{2}{1}{P}{5,1}$ has Kodaira dimension $2$. 
            \end{itemize}
    \end{theorem}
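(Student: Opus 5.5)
This theorem is the assembly of the case-by-case lemmas of this section, so the plan is to cite each lemma for its portrait and check that the value is a birational invariant of the moduli space itself. The Kodaira dimension of a variety is that of any smooth projective model. Each $\Mod{2}{1}{P}{}$ is an open subset of the surface $S$ constructed in the lemma of Section 4, namely the hypersurface in $\A^3$ with the planar components removed. It is therefore birational to the smooth projective surface obtained from $\overline{S}$ by the blow-ups in the individual lemmas, or to the resolved double cover in the $\mathcal{P}_{4,2}$ and $\mathcal{P}_{4,1,1}$ cases.

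The cases are then quoted as follows. The cubic-surface lemma gives a weak del Pezzo model of $S_{3,1,1,1}$, so $\kappa=-\infty$. The double-cover lemma gives a K3 model of $\mathcal{P}_{4,2}$ with trivial canonical class and $q=0$, so $\kappa=0$. The three elliptic-fibration lemmas, for $\mathcal{P}_{3,2,1}$ via the pencil of planes through $p_8,p_9$, for $\mathcal{P}_{3,3}$ in the model inside $P_1^6$, and for $\mathcal{P}_{4,1,1}$ via the pencil of lines through $p_2$, each produce a surface whose canonical system is a pencil with $K^2=0$ and genus-one fibres, so $\kappa=1$. The sextic-model lemma gives $K$ nef with $K^2=14>0$ for $\mathcal{P}_{5,1}$, so $\kappa=2$.

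The main point to verify is that each resolved model really computes $\kappa$. For $\kappa=1$ it is not enough to know that $|K|$ is a pencil. One also needs $h^0(mK)$ to grow linearly, which holds because a nonzero effective divisor $K$ with $K\cdot C\ge 0$ on its components and $K^2=0$ on a non-ruled surface gives $\kappa=1$. The irreducibility of the fibres (the Bertini argument) and the contraction of the $(-1)$-curves $L'$ and $L_i'$ should be checked so that the minimal model has $K$ nef with $K^2=0$. The $\kappa=0$ case additionally needs $h^0(K)=1$ rather than merely $K\equiv 0$ numerically, and triviality of $K$ gives this. For $\kappa=2$, nef and big is sufficient, and $K^2>0$ together with nefness implies bigness.

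Finally, since all spaces are irreducible (an irreducible component of $\mathcal{S}$ by construction), no component issues arise. The theorem then follows by listing these values.
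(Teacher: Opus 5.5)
Your proposal is correct and follows the paper's own route: the paper proves the theorem simply by assembling the six case-by-case lemmas of Section 5, exactly as you do. These are the weak del Pezzo cubic for $\mathcal{P}_{3,1,1,1}$, the K3 double plane for $\mathcal{P}_{4,2}$, elliptic fibrations for $\mathcal{P}_{3,2,1}$, $\mathcal{P}_{3,3}$, $\mathcal{P}_{4,1,1}$, and $K$ nef with $K^2>0$ for $\mathcal{P}_{5,1}$. Your added checks (contracting $L'$ and the $L_i'$ before reading off $K^2=0$, and using nefness together with the sign of $K^2$ to pin down $\kappa$) are sound and only make explicit what those lemmas rely on; note that for $\mathcal{P}_{5,1}$ only $K^2>0$ matters, which is just as well, since with the correct triple-point coefficients $-3E_j$ in $\pi_{2*}\overline{S}_{5,1}'$ the paper's computation actually gives $24-15=9$ rather than $14$.
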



    \begin{corollary}
        Let $\mathcal{P}$ be a portrait. 
        Suppose that $\mathcal{P}_6$ or $ \mathcal{P}_{5,1}$ are subportraits. 
        If $\Mod{2}{1}{P}{} \neq \emptyset$, then it is a surface of general type. 
    \end{corollary}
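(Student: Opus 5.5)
The plan is to compare $\Mod{2}{1}{P}{}$ with the moduli space of a six-point subportrait using the forgetful maps of Section 4. Suppose $\mathcal{P}'$ is a subportrait of $\mathcal{P}$, where $\mathcal{P}'$ is either $\mathcal{P}_6$ or $\mathcal{P}_{5,1}$. Forgetting the points of $\mathcal{V}\setminus\mathcal{V}'$ gives an $\Aut{\P^1}$-equivariant morphism $\mathrm{End}_2^1[\mathcal{P}] \to \mathrm{End}_2^1[\mathcal{P}']$. This descends to a morphism
\[
\rho: \Mod{2}{1}{P}{} \longrightarrow \mathcal{M}_2^1[\mathcal{P}'].
\]

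The first step is to show that $\rho$ is quasi-finite. Over a point $(f,p_1,\dots,p_6)$, the map $f$ is already determined as a point of $\mathrm{Rat}_2^1$, since five or six points of a cycle pin down the coefficients (as in Lemma 4.3 and the discussion of $S_n$ for $n \ge 5$). Each extra point $p_i$ must then satisfy a periodicity or pre-periodicity relation. If $p_i$ is periodic of period $m$, it is a root of $\Phi^*_{m,f}$. If $p_i$ is strictly pre-periodic, it lies in some $f^{-(k)}$ of such a root. In both cases there are only finitely many choices. The same argument as in Lemmas 4.3 and 4.4 then makes $\rho$ a finite map onto its image. Because both spaces are surfaces, whenever $\Mod{2}{1}{P}{}\neq\emptyset$ the image is dense in the irreducible surface $\mathcal{M}_2^1[\mathcal{P}']$ (irreducible by \cite{Blanc_2015}). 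So $\rho$ is dominant and generically finite.

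The second step is to apply the standard fact that Kodaira dimension does not decrease under dominant generically finite maps. Take a smooth projective model $X$ of an irreducible component of $\Mod{2}{1}{P}{}$ and a smooth projective model $Y$ of $\mathcal{M}_2^1[\mathcal{P}']$, and resolve $\rho$ to a generically finite morphism $X\to Y$. By Riemann--Hurwitz, $K_X = \rho^*K_Y + R$ with $R\ge 0$. Hence $h^0(mK_X)\ge h^0(mK_Y)$ and $\kappa(X)\ge\kappa(Y)$. By \cite{Blanc_2015}, $\kappa(Y)=2$ when $\mathcal{P}'=\mathcal{P}_6$. By Lemma 5.7 (and Theorem 5.8), $\kappa(Y)=2$ when $\mathcal{P}'=\mathcal{P}_{5,1}$. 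Therefore $\kappa(X)=2$ and $\Mod{2}{1}{P}{}$ is of general type.

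I expect the main obstacle to be the dimension and irreducibility bookkeeping in the first step. I must confirm that $\Mod{2}{1}{P}{}$ is genuinely two-dimensional and that every component dominates, rather than some component mapping onto a curve in $\mathcal{M}_2^1[\mathcal{P}']$. Quasi-finiteness rules this out: a component with a non-dominant image would have dimension at most $1$. The statement concerns $\Mod{2}{1}{P}{}$ as a surface, so I interpret "general type" componentwise and apply the argument to each two-dimensional component. I would state this explicitly, and also note that the finiteness of fibers uses only $\deg \Phi^*_{m,f}<\infty$ and $\deg f = 2$.
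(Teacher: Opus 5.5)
Your proposal is correct and follows the paper's route. The paper's entire proof is the one-line remark that, via the forgetful map to $\mathcal{M}_2^1[\mathcal{P}_6]$ or $\mathcal{M}_2^1[\mathcal{P}_{5,1}]$, Riemann--Hurwitz writes the canonical divisor as a big pullback plus an effective ramification divisor, which is your $K_X=\rho^*K_Y+R$ step, and your quasi-finiteness and dominance discussion supplies details the paper omits.

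To close the dimension issue you flag, note that each condition $f(p_i)=p_{\Psi(i)}$ is locally one equation in the $(5+n)$-dimensional $\mathrm{Rat}_2^1\times(\P^1)^n$. So every component of $\Mod{2}{1}{P}{}$ has dimension at least $2$ after the free $\Aut{\P^1}$-quotient, and quasi-finiteness then forces each component to be a surface dominating the target. Also, irreducibility of $\mathcal{M}_2^1[\mathcal{P}_{5,1}]$ is not proved by Blanc--Canci--Elkies, who treat only cycles; the paper simply takes $S_{5,1}$ to be an irreducible sextic.
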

    Using Riemann-Hurwitz, we have that the canonical divisor is still the sum of a big and effective divisor. 


    \begin{corollary}
        Suppose that the surface $\Mod{2}{1}{P}{k}$ is general type for $k \geq 6$. 
        Let $\mathcal{P}$ be a pre-periodic portrait with $N$ points where $N \geq 59$. 
        Then, the surface $\Mod{2}{1}{P}{}$ is general type. 
    \end{corollary}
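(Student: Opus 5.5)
The plan is to reduce the statement to the previous corollary by a counting argument on the cycle structure of a degree two map. I will use two facts. The first is the general principle behind the previous corollary. If $\mathcal{Q}\subseteq\mathcal{P}$ is a subportrait, the forgetful map $\Mod{2}{1}{P}{}\to\mathcal{M}_2^1[\mathcal{Q}]$ is a morphism of surfaces, and it is generically finite. The reason is the covering lemmas of Section 4: once $\mathcal{Q}$ pins down $f$ up to finitely many choices, the remaining points are roots of dynatomic polynomials, or of the equations $f(p)=q$ for the tail points, so there are finitely many of them. A generically finite dominant map onto a component does not decrease Kodaira dimension. Hence if $\mathcal{M}_2^1[\mathcal{Q}]$ is of general type, every nonempty component of $\Mod{2}{1}{P}{}$ dominating it is of general type. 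The second fact is the bound on the number of $m$-cycles of a degree two map, read off from $\deg\Phi^*_{m,f}$ as recalled in Section 2: at most three fixed points, one $2$-cycle, two $3$-cycles, three $4$-cycles and six $5$-cycles.

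Next I argue that a nonempty $\Mod{2}{1}{P}{}$ with $\mathcal{P}$ on $N\ge 59$ points must contain either some $\mathcal{P}_k$ with $k\ge 6$ or $\mathcal{P}_{5,1}$ as a subportrait. In the first case the hypothesis gives that $\mathcal{M}_2^1[\mathcal{P}_k]$ is of general type; in the second case Lemma 5.6 does. Either way the first fact then gives the conclusion.

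To prove the dichotomy, suppose neither subportrait occurs. Then all cycles of $\mathcal{P}$ have length at most $5$, and $\mathcal{P}$ cannot contain both a $5$-cycle and a fixed point. The periodic part therefore has at most
\[
\max\bigl(3+2+6+12,\; 2+6+12+30\bigr)=50
\]
points. It remains to bound the strictly preperiodic points. Each point of $\mathcal{P}$ has at most two preimages under $\Psi$, because $f$ has degree two and the points are distinct. A periodic point already has one preimage inside its cycle, so it can support at most one tail branch. I expect the count of admissible tail points, together with the periodic points, to come out to at most $58$; this is where the constant $59$ should come from.

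The main obstacle is this tail count. Tails are a priori unbounded, so I will need an extra constraint beyond the naive preimage count. The first thing I would try is to show that each additional tail point imposes an independent condition. Since we work on a surface and all the freedom is in $f$ modulo $\Aut{\P^1}$, a sufficiently long tail should force $\Mod{2}{1}{P}{}=\emptyset$, or force a periodic subportrait that is already of general type. If this fails in general, I would restrict, as the rest of the paper does, to portraits that are unions of cycles. In that setting the bound is immediate from the displayed count, since $50<59$.

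Finally, I must check that the dominance step is legitimate, namely that the image of each component is a component of the target surface rather than a curve. This follows from dimension: both sides are surfaces and the fibers are finite.
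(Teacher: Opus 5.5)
\textbf{The tail step fails.} The tail count you flag as the main obstacle cannot be carried out. Once tails are allowed there is no bound on them, and the dichotomy you want is false. Your own first paragraph explains why. A tail point is one of finitely many solutions of $f(p)=q$, so it imposes no condition on $f$ modulo $\Aut{\P^1}$. It never cuts the surface down, and it never makes it empty.

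Here is a concrete counterexample. Let $\mathcal{P}$ be a fixed point $v_0$ with a chain $v_{58}\mapsto v_{57}\mapsto\cdots\mapsto v_1\mapsto v_0$, which is $59$ points.
\begin{itemize}
\item Take a general $f$ and a non-critical fixed point $p_0$. Let $p_1\neq p_0$ be the other preimage of $p_0$, and choose successively $p_{i+1}\in f^{-1}(p_i)$.
\item No $p_i$ with $i\ge 1$ is periodic. Otherwise its cycle would contain $f^{i}(p_i)=p_0$, forcing $p_i=p_0$ and then $p_1=f^{i-1}(p_i)=p_0$.
\item The points are distinct. If $p_i=p_j$ with $i<j$, then $p_{j-i}=f^{i}(p_j)=p_0$, which we just excluded.
\end{itemize}
So $\mathcal{M}_2^1[\mathcal{P}]$ is a nonempty surface, finite over the space of maps with a marked fixed point, and $\mathcal{P}$ has no cycle of length $\ge 5$. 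Neither emptiness nor a general-type periodic subportrait is forced, so your method says nothing about such $\mathcal{P}$. Whether these spaces are of general type is a different question. It would have to come from something like the growth of the branch curves of the iterated-preimage double covers, not from a point count.

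\textbf{The union-of-cycles case is correct.} Commit to the union-of-cycles reading instead of presenting it as a fallback. That is exactly the scope of the paper's proof, which speaks only of partitions of $N$. In that setting your argument is correct and is essentially the paper's. The paper's listed cycle bounds allow at most $3+2+6+12+30=53$ points in cycles of length $\le 5$, so a $k$-cycle with $k\ge 6$ must occur. It then writes $K=p^{*}K_{\mathcal{M}_2^1[\mathcal{P}_k]}+\mathcal{R}$ for the forgetful map, which is your statement that a generically finite dominant map does not lower Kodaira dimension.

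Your extra use of $\mathcal{P}_{5,1}$ lowers the threshold from $N\ge 54$ to $N\ge 51$. It is not needed for $N\ge 59$, and it makes the argument rely on the $\mathcal{P}_{5,1}$ computation on top of the hypothesis on $\mathcal{P}_k$. Note also that the $\mathcal{P}_{5,1}$ result is Lemma 5.7; Lemma 5.6 is the $\kappa=1$ case $\mathcal{P}_{4,1,1}$.
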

    \begin{proof}
        Note that the only partitions of $N \geq 59$ that we consider have a bound on the number of $1,2,3,4,5$ that can appear. 
        Any degree two rational map on $\P^1$ has at most three fixed points, one $2$-cycle, two $3$-cycles, three $4$-cycles, and six $5$-cycles. 
        Thus, any partition of $N$ that yields a nonempty moduli space is guaranteed to have $k \geq 6$ appear in its partition. 
        Hence, there is a map, $p: \Mod{2}{1}{P}{} \to \Mod{2}{1}{P}{k}$ for $k\geq 6$. 
        Thus, the canonical divisor of $\Mod{2}{1}{P}{N}$, $p^*(K_{\Mod{2}{1}{P}{k}}) + \mathcal{R}$, is the sum of a big and effective divisor, so that the moduli space is general type. 
    \end{proof}

    \bibliographystyle{plain}
    \bibliography{References}

@article{milnor1993geometry,
    title={Geometry and dynamics of quadratic rational maps, with an appendix by the author and Lei Tan},
    author={Milnor, John},
    journal={Experimental mathematics},
    volume={2},
    number={1},
    pages={37--83},
    year={1993},
    publisher={Taylor \& Francis}
}

@misc{silverman1996spacerationalmapsp1,
    title={The space of rational maps on $\mathbb{P}^1$}, 
    author={Joseph H. Silverman},
    year={1996},
    eprint={math/9609212},
    archivePrefix={arXiv},
    primaryClass={math.DS},
    url={https://arxiv.org/abs/math/9609212}, 
}

@article{Doyle_2020,
    title={Moduli spaces for dynamical systems with portraits},
    volume={64},
    ISSN={0019-2082},
    url={http://dx.doi.org/10.1215/00192082-8642523},
    DOI={10.1215/00192082-8642523},
    number={3},
    journal={Illinois Journal of Mathematics},
    publisher={Duke University Press},
    author={Doyle, John R. and Silverman, Joseph H.},
    year={2020},
    month=sep 
}

@misc{manes2009modulispacesfamiliesrational,
    title={Moduli spaces for families of rational maps on $\mathbb{P}^1$}, 
    author={Michelle Manes},
    year={2009},
    eprint={0902.1813},
    archivePrefix={arXiv},
    primaryClass={math.NT},
    url={https://arxiv.org/abs/0902.1813}, 
}

@article{JOUANOLOU1991117,
title = {Le formalisme du résultant},
journal = {Advances in Mathematics},
volume = {90},
number = {2},
pages = {117-263},
year = {1991},
issn = {0001-8708},
doi = {https://doi.org/10.1016/0001-8708(91)90031-2},
url = {https://www.sciencedirect.com/science/article/pii/0001870891900312},
author = {J.P Jouanolou}
}

@article{Blanc_2015,
   title={Moduli Spaces of Quadratic Rational Maps with a Marked Periodic Point of Small Order},
   ISSN={1687-0247},
   url={http://dx.doi.org/10.1093/imrn/rnv063},
   DOI={10.1093/imrn/rnv063},
   journal={International Mathematics Research Notices},
   publisher={Oxford University Press (OUP)},
   author={Blanc, Jérémy and Canci, Jung Kyu and Elkies, Noam D.},
   year={2015},
   month=mar, pages={rnv063}}

@misc{vakil2005space,
      author = {Ravi Vakil},
      title = {THE SPACE OF ORDERED POINTS ON THE PROJECTIVE LINE IS ALMOST ALWAYS CUT OUT BY QUADRICS},
      year = {2005},
      month = {sept},
      note = {http://math.stanford.edu/~vakil/HMS/nptssept12.pdf},
    }

@article{Morton1994RationalPP,
  title={Rational periodic points of rational functions},
  author={Morton, Patrick and Silverman, Joseph H.},
  journal={International Mathematics Research Notices},
  volume={1994},
  number={2},
  pages={97--110},
  year={1994},
  publisher={Oxford University Press},
  doi={10.1155/S1073792894000127}
}

@misc{flynn1995cyclesquadraticpolynomialsrational,
      title={Cycles of Quadratic Polynomials and Rational Points on a Genus-Two Curve}, 
      author={E. V. Flynn and Bjorn Poonen and Edward F. Schaefer},
      year={1995},
      eprint={math/9508211},
      archivePrefix={arXiv},
      primaryClass={math.NT},
      url={https://arxiv.org/abs/math/9508211}, 
}

@misc{poonen1995completeclassificationrationalpreperiodic,
      title={The Complete Classification of Rational Preperiodic Points of Quadratic Polynomials over Q: A Refined Conjecture}, 
      author={Bjorn Poonen},
      year={1995},
      eprint={math/9512217},
      archivePrefix={arXiv},
      primaryClass={math.NT},
      url={https://arxiv.org/abs/math/9512217}, 
}

@book{barth2003compact,
  title={Compact Complex Surfaces},
  author={Barth, W. and Hulek, K. and Peters, C. and van de Ven, A.},
  isbn={9783540008323},
  lccn={2003063511},
  series={Ergebnisse der Mathematik und ihrer Grenzgebiete. 3. Folge / A Series of Modern Surveys in Mathematics},
  url={https://books.google.com/books?id=LtWDVZxiK6EC},
  year={2003},
  publisher={Springer Berlin Heidelberg}
}

@manual{sagemath,
  Key          = {SageMath},
  Author       = {{The Sage Developers}},
  Title        = {{S}ageMath, the {S}age {M}athematics {S}oftware {S}ystem ({V}ersion 9.5)},
  note         = {{\tt https://www.sagemath.org}},
  Year         = {2022},
}

    \section{Acknowledgements}
        I am grateful to my advisor, John Lesieutre, for his useful and invaluable guidance and helpful comments throughout this work. I would like to thank several graduate students at Penn State (Jacob Canel, Austin Davis, Andy B. Day, Eugene Henninger-Voss, Qitong Jiang, Neelarnab Raha, Pisya Vikash, and Xingkai Wang).
        The \texttt{SageMath} system was vital in carrying out many of the computations. \phantom{\cite{sagemath}}

\end{document}